\newcommand{\norm}[1]{\lVert #1 \rVert}
\newcommand\R{{\mathbb R}}
\newcommand\C{{\mathbb C}}
\DeclareMathOperator{\Link}{L}
\newtheorem{theorem}{Theorem}[section]
\newtheorem{proposition}[theorem]{Proposition}
\newtheorem*{theorem*}{Theorem}
\newtheorem{lemma}[theorem]{Lemma}
\theoremstyle{definition}
\newtheorem*{amalgamation*}{Amalgamation}
\newtheorem{remark}[theorem]{Remark}
\newtheorem*{remark*}{Remark}
\theoremstyle{plain}
\newcounter{algorithm}
\renewcommand{\int}{\operatorname{int}}
\keywords{Complex surface singularities, Lipschitz geometry, singularity Links, lens spaces, Hirzebruch--Jung singularities, cusp singularities.}
\subjclass[2020]{Primary 32S25, 
	32S50;  	
	Secondary 14B05, 
	57K35.  	
}
\begin{document}

	\title[Topological and bilipschitz types of surface singularities and links]{Topological and bilipschitz types of complex surface singularities and their links}

\date{\today}

\author[L. Fantini]{Lorenzo Fantini}
\address{Centre de Mathématiques Laurent Schwartz, Ecole Polytechnique, CNRS, Institut Polytechnique de Paris, Palaiseau, France}
\email{\href{mailto:lorenzo.fantini@polytechnique.edu}{lorenzo.fantini@polytechnique.edu}}
\urladdr{\url{https://lorenzofantini.eu/}}

\author[A. Pichon]{Anne Pichon}
\address{Aix Marseille Univ, CNRS, I2M, Marseille, France}
\email{\href{mailto:anne.pichon@univ-amu.fr}{anne.pichon@univ-amu.fr}}
\urladdr{\url{https://www.i2m.univ-amu.fr/perso/anne.pichon/}}

\begin{abstract}
In this paper, we prove that two normal complex surface germs that are inner bilipschitz---but not necessarily orientation-preserving---homeomorphic, have in fact the same oriented topological type and the same minimal plumbing graph.
Along the way, we show that the oriented homeomorphism type of an isolated complex surface singularity germ determines the oriented homeomorphism type of its link, providing a converse to the classical Conical Structure Theorem.
These results require to study the topology first, and the inner lipschitz geometry later, of Hirzebruch--Jung and cusp singularities, the normal surface singularities whose links are lens spaces and fiber bundles over the circle.
\end{abstract}

\maketitle


\hfill \textit{To David Trotman, in memoriam.}

\bigskip

\section{Introduction}

{\renewcommand*{\thetheorem}{\Alph{theorem}}  

Let $(X,0) \subset (\C^n,0)$ be a complex surface germ with an isolated singularity.
By the classical \emph{Conical Structure Theorem} (see for instance \cite[9.3.6]{BochnakCosteRoy1998}), given a radius $\epsilon>0$ small enough, the 4-manifold $X\cap B_\epsilon = \big\{x\in X\,\big|\,\norm{x}_{\C^n}\leq\epsilon\big\}$ is homeomorphic to the cone over the \emph{link} $\Link(X,0)=X\cap S_\epsilon = \big\{x\in X\,\big|\,\norm{x}_{\C^n}=\epsilon\big\}$ of $(X,0)$, which is a 3-manifold that is well-defined up to homeomorphism. 
Furthermore, $(X,0)$ has a natural orientation induced by its its complex structure, and its link  $\Link(X,0)$ is then also naturally oriented as the boundary of the $4$-manifold $X\cap B_\epsilon$.

As an immediate consequence of the Conical Structure Theorem, if the links of two isolated surface singularities $(X,0)$ and $(Y,0)$ are homeomorphic, then the germs are themselves homeomorphic, and 
the analogous result holds when 
keeping track of orientations: if two links are orientation-preserving homeomorphic then so are the original surface germs.
In this short paper we first prove that the converse also holds.
This consists of the implication $\ref{homeo} \Rightarrow \ref{link}$ appearing in the following more complete statement; see the discussion that follows it for a definition of the terms that we have not yet introduced.
Note that this implication holds in any dimension for hypersurface germs in $(\C^n,0)$, as was shown by Saeki \cite{Saeki1989}.
For simplicity, throughout the paper we will always implicitly assume that our singular germs are irreducible, that is, equivalently, that their links are connected.

\begin{theorem}
	\label{thm:oriented_topological type for complex surfaces} 
	Let $(X,0)$ and $(Y,0)$ be two  germs of complex surfaces with isolated singularities. 
	The following are equivalent: 
	\begin{enumerate}  
		\item \label{or_link} The links $\Link(X,0)$ and $\Link(Y,0)$ are orientation-preserving homeomorphic;
		\item \label{or_homeo} The germs $(X,0)$ and $(Y,0)$ are orientation-preserving homeomorphic;
		\item \label{or_sub-homeo} The germs $(X,0)$ and $(Y,0)$ are orientation-preserving subanalytically homeomorphic;
		\item \label{or_graph} The links $\Link(X,0)$ and $\Link(Y,0)$ have the same minimal plumbing graph.	\end{enumerate}
\end{theorem}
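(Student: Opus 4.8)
The plan is to prove the theorem by establishing a cycle of implications among the four statements, using the classical theory of plumbed 3-manifolds as the backbone. Three of the four implications are essentially classical, so the real content---and the part the paper must contribute---is the implication $\ref{or_homeo} \Rightarrow \ref{or_link}$, the converse to the Conical Structure Theorem. I would organize the argument as $\ref{or_link} \Rightarrow \ref{or_homeo} \Rightarrow \ref{or_link}$ for the hard loop, together with the equivalence $\ref{or_link} \Leftrightarrow \ref{or_graph}$ coming from Neumann's theory and the observation that $\ref{or_sub-homeo} \Rightarrow \ref{or_homeo}$ is trivial while $\ref{or_homeo} \Rightarrow \ref{or_sub-homeo}$ requires upgrading a homeomorphism to a subanalytic one.

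First I would dispatch the easy direction $\ref{or_link} \Rightarrow \ref{or_homeo}$: this is the immediate consequence of the Conical Structure Theorem already noted in the introduction, since an orientation-preserving homeomorphism of links extends radially (coning) to an orientation-preserving homeomorphism of the germs $X \cap B_\epsilon \cong \Cone(\Link(X,0))$ and $Y \cap B_\epsilon \cong \Cone(\Link(Y,0))$. Next I would handle $\ref{or_link} \Leftrightarrow \ref{or_graph}$ by invoking Neumann's theorem: for normal surface singularity links, which are plumbed 3-manifolds that are neither lens spaces nor torus bundles (or handling those special cases separately, which is exactly why the Hirzebruch--Jung and cusp singularities mentioned in the abstract are studied), the minimal plumbing graph is a complete invariant of the oriented homeomorphism type. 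Thus two links are orientation-preserving homeomorphic precisely when their minimal plumbing graphs agree.

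The heart of the matter is $\ref{or_homeo} \Rightarrow \ref{or_link}$. The difficulty is that a homeomorphism $\Phi\colon (X,0) \to (Y,0)$ of germs need not respect the conical/radial structure, so it does not obviously restrict to a homeomorphism of links. My strategy would be to recover the link intrinsically from the germ in a way that any homeomorphism must preserve. Concretely, I would identify $\Link(X,0)$ with the boundary of a regular neighborhood of the singular point, and argue that the homeomorphism type of the pair $\big(X \cap B_\epsilon,\, 0\big)$ determines the homeomorphism type of the complement $X \cap (B_\epsilon \setminus \{0\})$, hence of its end. The link is the oriented homeomorphism type of this end (the space of directions toward $0$), and since $\Phi$ is orientation-preserving and sends $0$ to $0$, it induces an orientation-preserving homeomorphism of the punctured germs and therefore of the ends. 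Making the notion of ``end'' rigorous for a possibly non-conical homeomorphism is where I expect the main obstacle to lie: one must show that the end of the punctured germ is a well-defined 3-manifold whose oriented homeomorphism type is a germ invariant, independent of $\epsilon$ and of the embedding, and that it coincides with the link. For general normal surface germs this can be done via the theory of ends of tame manifolds, but the subtlety is orientation bookkeeping, since the theorem deliberately allows \emph{non}-orientation-preserving homeomorphisms at the level of inner bilipschitz maps elsewhere in the paper; here one must verify that the complex orientation is recovered from the germ's topology alone.

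Finally, for $\ref{or_homeo} \Rightarrow \ref{or_sub-homeo}$ I would upgrade an arbitrary orientation-preserving homeomorphism to a subanalytic one. Since $\ref{or_homeo}$ already implies $\ref{or_link}$ and hence $\ref{or_graph}$, both germs share the same minimal plumbing graph; one can then build a subanalytic homeomorphism directly from a common resolution by matching the plumbing data, realizing the isomorphism of plumbing graphs by a piecewise-real-analytic (in particular subanalytic) identification of the plumbed pieces and coning. The reverse implication $\ref{or_sub-homeo} \Rightarrow \ref{or_homeo}$ is immediate as every subanalytic homeomorphism is a homeomorphism, closing all the loops.
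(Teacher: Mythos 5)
Your overall architecture (a cycle of implications with $\ref{or_homeo} \Rightarrow \ref{or_link}$ as the only real content) matches the paper's, but your sketch of that key implication has a genuine gap at exactly the point you flag and then wave away. You claim that ``the end of the punctured germ is a well-defined 3-manifold whose oriented homeomorphism type is a germ invariant,'' to be justified ``via the theory of ends of tame manifolds.'' That claim is precisely what must be proven, and it is not a consequence of tameness or end theory: the punctured cone $C(M)\setminus\{0\}$ is homeomorphic to $M\times\R$, and all that an orientation-preserving homeomorphism of punctured germs gives you formally is a proper homotopy equivalence near the ends, i.e.\ that the two links cobound an oriented $h$-cobordism---this is exactly the content of the paper's Lemma~\ref{lem:cobordism}. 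Passing from ``$h$-cobordant'' to ``homeomorphic'' is a rigidity statement specific to $3$-manifolds: the cobordism here is $4$-dimensional, where no topological $s$-cobordism theorem is available, and the analogous claim is simply false in higher dimensions, where there exist homeomorphic real analytic germs with non-homeomorphic links (see \cite{King1976} and Remark~\ref{remark:higher_dimension}). So no argument that avoids injecting specific $3$-manifold topology can close this loop, and your ``space of directions'' is not even well defined up to homeomorphism without it. The paper supplies the missing rigidity in two ways: when $\Link(X,0)$ is neither a lens space nor a torus bundle over the circle, via fundamental groups together with the classification results of Waldhausen \cite{Waldhausen1968} and Seifert--Threlfall/Orlik \cite{Orlik1972} (Proposition~\ref{prop:special_case}); and in the two remaining cases, via Lemma~\ref{lem:cobordism} combined with Turaev's theorem \cite{Turaev1988} that $h$-cobordant irreducible geometric $3$-manifolds are homeomorphic, irreducibility of singularity links being Neumann's \cite[Theorem~1]{Neumann1981}. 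None of these ingredients appears in your proposal.

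Two smaller points. First, your hedge in $\ref{or_link} \Leftrightarrow \ref{or_graph}$ is backwards: Neumann's \cite[Theorem~2]{Neumann1981} gives the \emph{oriented} classification by minimal plumbing graphs for all singularity links, including lens spaces and torus bundles; those cases are exceptional only for the non-oriented statement (Theorem~\ref{thm:topological type for complex surfaces} and Proposition~\ref{prop:special_case}). Second, for $\ref{or_homeo} \Rightarrow \ref{or_sub-homeo}$ the paper does not rebuild a subanalytic homeomorphism from plumbing data: it quotes Trotman's theorem, whose proof starts from $\ref{or_link}$, triangulates the links semialgebraically, invokes Moise's theorem to get a PL isomorphism, and cones (Remark~\ref{remark:Trotmans_proof}); your sketch via plumbed pieces is plausible in spirit, but you would need to justify that the identifications of the pieces can be made genuinely subanalytic, which you do not address.
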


The implication $\ref{or_link} \Rightarrow \ref{or_homeo}$ comes from the Conical Structure Theorem, while the implication $\ref{or_link}\Rightarrow\ref{or_sub-homeo}$ is proven by Trotman \cite[Theorem~3.1]{Trotman2024}, as discussed in Remark~\ref{remark:Trotmans_proof} (the converse $\ref{or_sub-homeo} \Rightarrow \ref{or_link}$ being immediate). 
To discuss condition $\ref{or_graph}$, let us briefly recall the notion of \emph{plumbing graph} and the foundational results of Neumann.
In 1981, Neumann proved that the oriented link of an isolated complex surface singularity is an irreducible $3$-manifold (\cite[Theorem~1]{Neumann1981}).
Furthermore, it is a graph manifold in the sense of Waldhausen whose oriented homeomorphism class is determined by its so-called \emph{plumbing graphs}, which are the dual resolution graphs of the good resolutions of the singularity, decorated with the genera and self-intersections of the associated exceptional components.
Neumann then showed that, conversely, the data or the oriented homeomorphism class of the link determines its minimal plumbing structure up to isomorphism, and therefore its minimal plumbing graph, that is it determines the topology of the minimal good resolution of the singularity
(see \cite[Theorem~2]{Neumann1981}).
This is what we recorded as the equivalence $\ref{or_link} \Leftrightarrow \ref{or_graph}$ in the statement of Theorem~\ref{thm:oriented_topological type for complex surfaces}.
Note that in fact the oriented homeomorphism class of the link determines its minimal plumbing structure 
even up to isotopy, as proved by Popescu-Pampu \cite[Theorem~9.7]{Popescu-Pampu2007}.

Our first contribution in this paper is a proof of the implication $\ref{homeo} \Rightarrow \ref{link}$, which requires some recent results on the topology of 3-manifolds.
%
As far as we have been able to ascertain, this result is essentially known for the surface germs in $(\C^3,0)$, thanks to the already mentioned result of Saeki~\cite{Saeki1989}, and for those whose link is neither a lens space nor a torus bundle over a circle.
Indeed, outside of those two families of links, it can be obtained as a simple consequence of the results of Neumann \cite[Section~7]{Neumann1981}, combined with a theorem due to Waldhausen and Seifert--Threlfall classifying Seifert manifolds via their fundamental group (see Proposition~\ref{prop:special_case} for the complete argument).
To treat the general case, we combine the fact that a homeomorphism between the germs induces a $h$-cobordism between the two links (Lemma~\ref{lem:cobordism}) with a theorem of Turaev \cite[Theorem~1.4]{Turaev1988} stating that two $h$-cobordant geometric 3-manifolds are homeomorphic (see also \cite[Theorem~14]{Weber2018}).

As a fairly simple consequence of the first part of the proof of  $\ref{homeo} \Rightarrow \ref{link}$ of  Theorem~\ref{thm:oriented_topological type for complex surfaces}, we also have the following non-oriented version of  Theorem~\ref{thm:oriented_topological type for complex surfaces}.

\begin{theorem}  \label{thm:topological type for complex surfaces} 
	Let $(X,0)$ and $(Y,0)$ be two  germs of complex surfaces with isolated singularities. 
	The following are equivalent: 
	\begin{enumerate}
		\item \label{link} The links $\Link(X,0)$ and $\Link(Y,0)$ are homeomorphic;
		\item \label{homeo} The germs $(X,0)$ and $(Y,0)$ are homeomorphic;
		\item \label{sub-homeo} The germs $(X,0)$ and $(Y,0)$ are subanalytically homeomorphic.
	\end{enumerate}
\end{theorem}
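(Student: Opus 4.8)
The plan is to derive Theorem~\ref{thm:topological type for complex surfaces} from Theorem~\ref{thm:oriented_topological type for complex surfaces} by systematically forgetting orientations, establishing the cycle of implications $\ref{sub-homeo}\Rightarrow\ref{homeo}\Rightarrow\ref{link}\Rightarrow\ref{sub-homeo}$. The first of these, $\ref{sub-homeo}\Rightarrow\ref{homeo}$, is immediate, since every subanalytic homeomorphism is in particular a homeomorphism.

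For $\ref{homeo}\Rightarrow\ref{link}$ I would repeat the argument proving $\ref{or_homeo}\Rightarrow\ref{or_link}$ in Theorem~\ref{thm:oriented_topological type for complex surfaces}, dropping every reference to orientations. A (not necessarily orientation-preserving) homeomorphism between the germs produces, via Lemma~\ref{lem:cobordism}, an $h$-cobordism between the two links---where one of them may carry the orientation opposite to its natural one, a fact that is irrelevant here---and Turaev's theorem then yields a homeomorphism between them. This is precisely the first, purely topological part of the proof of that implication, carried out before the homeomorphism of links is upgraded to an orientation-preserving one, and it is the only substantial ingredient required.

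For $\ref{link}\Rightarrow\ref{sub-homeo}$ I would distinguish two cases, according to whether the given homeomorphism $\Link(X,0)\to\Link(Y,0)$ preserves or reverses the natural orientations. If it preserves them, the implication $\ref{or_link}\Rightarrow\ref{or_sub-homeo}$ of Theorem~\ref{thm:oriented_topological type for complex surfaces} applies directly. If it reverses them, I would pass to the complex conjugate germ $(\overline Y,0)$: complex conjugation restricts to a subanalytic homeomorphism $(Y,0)\to(\overline Y,0)$ and identifies $\Link(\overline Y,0)$ with $-\Link(Y,0)$, so that $\Link(X,0)$ and $\Link(\overline Y,0)$ become orientation-preserving homeomorphic. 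Theorem~\ref{thm:oriented_topological type for complex surfaces} then provides a subanalytic homeomorphism $(X,0)\to(\overline Y,0)$, which composed with conjugation gives one $(X,0)\to(Y,0)$.

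Since the whole argument rests on Theorem~\ref{thm:oriented_topological type for complex surfaces}, I do not expect any genuine obstacle beyond the work already invested in the oriented statement. The only two points deserving care are checking that the $h$-cobordism and Turaev step does not rely on the orientation-preserving hypothesis, and keeping the orientation bookkeeping straight when passing to the conjugate germ---both routine, which is exactly why this non-oriented version follows so quickly.
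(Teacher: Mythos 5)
Your proof of \ref{link} $\Rightarrow$ \ref{sub-homeo} contains a genuine error: complex conjugation does \emph{not} identify $\Link(\overline Y,0)$ with $-\Link(Y,0)$. An anti-holomorphic diffeomorphism between complex manifolds of complex dimension $n$ multiplies the orientation by $(-1)^n$; here $n=2$, so conjugation $(Y,0)\to(\overline Y,0)$ is orientation-\emph{preserving} on the smooth locus, and hence the induced homeomorphism $\Link(Y,0)\to\Link(\overline Y,0)$ of boundary links preserves the natural orientations as well. (The trick you have in mind does work for plane curve germs, where $n=1$.) There is moreover a structural reason why no variant of this idea can be repaired: if some construction produced from $(Y,0)$ a complex surface germ whose link is $-\Link(Y,0)$, then the class of surface singularity links would be closed under orientation reversal, contradicting \cite[Theorem~3]{Neumann1981}, which says that this happens only for lens spaces and torus bundles over the circle. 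The correct argument for \ref{link} $\Rightarrow$ \ref{sub-homeo} is the one sketched in Remark~\ref{remark:Trotmans_proof}, which is orientation-blind and needs no case distinction: any homeomorphism $\Link(X,0)\to\Link(Y,0)$, orientation-preserving or not, yields via Moise's theorem a PL, hence semialgebraic, homeomorphism between the links, which one then cones off to obtain a subanalytic homeomorphism of the germs.

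Your implication \ref{homeo} $\Rightarrow$ \ref{link}, by contrast, is sound, and is in fact a more uniform route than the paper's. The proof of Lemma~\ref{lem:cobordism} uses the orientation hypothesis only to label the cobordism as oriented: if $\phi\colon C(M)\to C(N)$ reverses orientation, apply the lemma to $\phi\colon C(M)\to C(\overline N)$, where $\overline N$ is $N$ with reversed orientation (the cone orientation being induced by that of the base), and Turaev's theorem then gives a homeomorphism $\Link(X,0)\cong\Link(Y,0)$, possibly after this flip, which is all that is claimed. Two small points should still be made explicit: one must first reduce to normal germs via the normalization (a homeomorphism on germs and on links), and one must quote the irreducibility of the links from \cite[Theorem~1]{Neumann1981} so that Turaev's theorem applies. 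The paper instead treats the generic case by fundamental-group arguments (Proposition~\ref{prop:special_case}) and reduces the lens space and torus bundle cases to the oriented statement by changing the orientation of $(Y,0)$, which requires \cite[Theorem~3]{Neumann1981}; your argument bypasses both, at the cost of invoking the $h$-cobordism machinery in all cases.
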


However, as we will see in 
Proposition~\ref{prop:special_case}, the non oriented homeomorphism class of $(X,0)$ still determines the minimal plumbing graph of the link $\Link(X,0)$ except 
when $\Link(X,0)$ is a lens space or a torus bundle over the circle.
Now, a natural question is whether the (non oriented) homeomorphism class determines the minimal plumbing graph if we require the homeomorphism between the singular germs satisfies some additional requirement.
In Section~\ref{section:bi-lipschitz}, we prove that the answer to this question is positive, in the case of normal surfaces, if we impose that the homeomorphism is \emph{bilipschitz with respect to the inner metric}.
Recall that given an embedding of our germ $(X,0)$ into a smooth germ $(\C^N,0)$, the \emph{inner metric} of $(X,0)$ is the distance given by measuring the length of the shortest path in $X$ between two given points; it is well-defined (that is, independent of the embedding in a smooth germ) up to a \emph{bilipschitz homeomorphism}, which is a homeomorphism which is Lipschitz and whose inverse is itself Lipschitz.
We are now ready to state our final result.

\begin{theorem} 
	\label{thm:bilipschitz type for complex surfaces} 
	Let $(X,0)$ and $(Y,0)$ be two germs of normal complex surfaces and assume that $(X,0)$ and $(Y,0)$ are inner bilipschitz homeomorphic. 
	Then the germs $(X,0)$ and $(Y,0)$ are orientation-preserving homeomorphic, and therefore they have the same minimal plumbing graph.
\end{theorem}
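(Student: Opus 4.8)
The plan is to exploit that an inner bilipschitz homeomorphism is in particular a homeomorphism, so that Theorem~\ref{thm:topological type for complex surfaces} already gives that the links $\Link(X,0)$ and $\Link(Y,0)$ are homeomorphic and the germs are homeomorphic; the entire content of the statement is therefore to upgrade this to an \emph{orientation-preserving} homeomorphism, after which Theorem~\ref{thm:oriented_topological type for complex surfaces} (the equivalence $\ref{or_homeo}\Leftrightarrow\ref{or_graph}$) yields the equality of minimal plumbing graphs for free. The first step is to isolate the only situations in which a non-oriented homeomorphism can fail to be orientation-preserving. By Neumann's results, packaged here in Proposition~\ref{prop:special_case}, the unoriented homeomorphism type of the link determines its minimal plumbing graph --- and hence its oriented homeomorphism type --- for every link except a lens space or a torus bundle over the circle. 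Thus for all germs other than Hirzebruch--Jung singularities and cusp singularities the theorem follows immediately from Theorem~\ref{thm:topological type for complex surfaces}, without using the metric at all; the remaining Seifert fibered links, such as those of simple elliptic singularities, are still covered by the Seifert case of Proposition~\ref{prop:special_case}, since there the Seifert structure is unique.

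It remains to treat the two exceptional families, and here I would argue by contradiction. If $(X,0)$ and $(Y,0)$ are both Hirzebruch--Jung (resp.\ both cusps) and are \emph{not} orientation-preserving homeomorphic, then since their links are unoriented homeomorphic the link of $(Y,0)$ must be the orientation reversal of that of $(X,0)$, and this common link must be chiral (otherwise an orientation-reversing self-homeomorphism would produce an orientation-preserving homeomorphism between the germs). For lens spaces this means passing from $L(p,q)$ to $L(p,p-q)$, that is, replacing the chain of self-intersections $[b_1,\dots,b_s]$ coming from the Hirzebruch--Jung continued fraction of $p/q$ by the Riemenschneider-dual chain of $p/(p-q)$; for cusps it means passing to the dual cusp, whose resolution carries the reversed cyclic sequence of self-intersections. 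So the statement reduces to the metric assertion that a Hirzebruch--Jung (resp.\ cusp) germ is never inner bilipschitz homeomorphic to the germ carrying the dual chain (resp.\ the dual cusp), unless the two chains already coincide.

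To prove this I would make the inner Lipschitz geometry of these singularities explicit and extract from it an \emph{oriented} invariant. The natural tool is the thick--thin decomposition of Birbrair--Neumann--Pichon together with the inner rates attached to the exceptional divisors of the minimal good resolution, all of which are invariants of the inner bilipschitz type. The plan is to compute this decorated graph for Hirzebruch--Jung and cusp singularities and to show that it recovers the ordered sequence $[b_1,\dots,b_s]$ (resp.\ the cyclic sequence for cusps), and not merely its dual. A clean sanity check is already visible in the simplest chiral pair, the cyclic quotient $\frac1p(1,1)$ versus the $A_{p-1}$-type quotient $\frac1p(1,p-1)$ for $p\geq 3$: the former embeds by forms of a single degree and is a genuine metric cone, while the latter is anisotropic along the exceptional line and hence not metrically conical, so their inner metrics cannot be bilipschitz even though the underlying lens spaces are unoriented homeomorphic. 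Once the decorated graph is shown to be orientation-sensitive in every chiral case, a germ and its reversal have distinct inner bilipschitz types, contradicting the hypothesis, and the theorem follows.

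The hard part will be exactly this last computation: showing that the inner Lipschitz invariant of a Hirzebruch--Jung or cusp singularity is never symmetric under the duality $q\leftrightarrow p-q$ (resp.\ cusp $\leftrightarrow$ dual cusp) precisely in the chiral cases. A priori the thick--thin data of a singularity and of its orientation reversal could coincide, and ruling this out requires understanding how the inner rates depend on the whole continued fraction, and not only on the homeomorphism type of the link; this is the step in which the topological preparation on Hirzebruch--Jung and cusp singularities feeds into the metric argument.
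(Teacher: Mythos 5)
Your reduction is the same as the paper's: quote Proposition~\ref{prop:special_case} (itself resting on \cite[Theorem~3]{Neumann1981}) to dispose of every case except when both germs are Hirzebruch--Jung or both are cusps, then try to show that a germ and its ``dual'' (the one carrying the reversed/dual chain of self-intersections) are never inner bilipschitz homeomorphic unless the chains agree. The gap is in the metric step, and it is not merely that you left it as ``the hard part'': the invariants you propose to use --- the thick-thin decomposition together with the inner rates --- are provably \emph{insufficient} to carry it out. The paper exhibits explicit counterexamples: the Hirzebruch--Jung singularities obtained by normalizing $z^{27}-x^{19}y=0$ and $z^{27}-x^{8}y=0$ have links $L(27,8)$ and $L(27,19)=L(27,27-8)$, hence dual minimal plumbing graphs ($[-4,-2,-3,-2]$ versus $[-2,-2,-4,-3]$, Figure~\ref{figure:example_explicit_lens_graphs}), yet both have three thick zones, two thin zones, and the \emph{same} inner rates $3/2$ and $2$; an analogous pair of cusps appears in Figure~\ref{figure:example_explicit_cusps_graphs}. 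So the scenario you flag as a risk (``a priori the thick-thin data of a singularity and of its orientation reversal could coincide'') is exactly what happens in general, and your plan cannot be completed along the proposed lines. Your sanity check $\frac1p(1,1)$ versus $\frac1p(1,p-1)$ is correct but misleading: it is the degenerate case where the thick-thin decompositions themselves differ (metrically conical versus not), which is far from typical for dual pairs.

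What is missing is the third datum of the classification theorem of \cite[Theorem 1.9]{BirbrairNeumannPichon2014}, namely the homotopy classes of the foliations of the thin zones by the Milnor fibers of a generic linear form $h$; the paper's concluding remark states explicitly that this datum is indispensable. Concretely, one first proves (via tautness, Lemmas~\ref{lem:taut_properties_1} and~\ref{lem:taut_properties_2}) that the minimal resolution factors through the blowup of the maximal ideal and that the maximal cycle is reduced; consequently the link $K$ of $h$ meets each thick piece $Y_i^{(\epsilon)}$ in exactly $-e_i-v_i$ parallel curves, where $e_i$ is the self-intersection of the corresponding $\mathcal L$-node curve and $v_i$ its valency. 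The foliation data determines the homotopy classes of the boundary curves $\gamma_i^{\pm}$ of the Milnor fiber inside the adjacent thin pieces, and since $\gamma_i^-+\gamma_i^++K_i$ bounds in $Y_i^{(\epsilon)}$, this pins down the class of $K_i$, hence the number $-e_i-v_i$ of its components, hence $e_i$. It is this count --- not the rates --- that breaks the symmetry between a chain and its dual and recovers the full weighted graph, after which Theorem~\ref{thm:oriented_topological type for complex surfaces} finishes the proof as you intended.
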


Let us outline the proof of this theorem.
Due to the results of Neumann mentioned above, the only cases that we have to study are those of Hirzebruch--Jung singularities (whose links are lens spaces) and cusp singularities (whose links are torus bundles over the circle); the definitions of those are recalled at the beginning of Section~\ref{section:bi-lipschitz}.
Both those classes of singularities are \emph{taut} (in the sense of Laufer \cite{Laufer1973a}: two such singular germs are analytically isomorphic as soon as they have the same minimal plumbing graph).
In particular, their minimal resolutions factor through the blowup of their maximal ideal (Lemma~\ref{lem:taut_properties_1}), and they have reduced fundamental cycle. 

We deduce that the inner bilipschitz class of those singularities determines the minimal plumbing graph of their link, and therefore their orientation-preserving homeomorphism class by Theorem~\ref{thm:oriented_topological type for complex surfaces}.
This relies on the main result of Birbrair--Neumann--Pichon \cite[Theorem 1.9]{BirbrairNeumannPichon2014}, which establishes the inner bilipschitz invariance of a decomposition of a given normal complex surface germ in \emph{thick} and \emph{thin} zones, of some rational numbers called \emph{inner rates} that measure how quickly the volumes of the links of the thin parts shrink when approaching the singularity (see also \cite{BelottodaSilvaFantiniPichon2022a} for details), and of the foliations given by the Milnor fibers of a generic linear form.
More precisely, the thick-thin decomposition and the corresponding inner rates determine the dual graph of the exceptional divisors of the minimal good resolution of a Hirzebruch--Jung or cusp singularity, while the invariance of the homotopy type of the leaves of the foliation allows us to retrieve its self-intersection matrix.


We observe that orientation-reversing homeomorphisms may exist between singularities of the two classes described above. 
However, if such an orientation-reversing homeomorphism is bilipschitz, then Theorem~\ref{thm:bilipschitz type for complex surfaces} implies that another orientation-preserving homeomorphism must also exist (see Remark~\ref{remark:orientation_reversing_bilipschitz}).

Note that Theorem~\ref{thm:bilipschitz type for complex surfaces} requires the complex singularities to be normal and not only isolated: the difference between the two shows itself when considering metric properties, and is otherwise invisible to the topology.
Indeed, not only the normalization morphism of a isolated surface singularity is a homeomorphism, but it also induces a homeomorphism at the level of the links (this is because the normalization, being an analytic function, induces a rug function in the sense of Durfee \cite{Durfee1983a}, see also \cite{LeMaugendreWeber2001} or \cite[page 166]{LuengoPichon2005} for the details).
In fact, this is what allows us to apply the results of \cite{Neumann1981}, which are stated in the normal case, for isolated singularities in the context of Theorems~\ref{thm:oriented_topological type for complex surfaces} and \ref{thm:topological type for complex surfaces}.
On the other hand, the normalization is in general not bilipschitz, and we do not know if Theorem~\ref{thm:bilipschitz type for complex surfaces} still holds for non-normal isolated singularities.
We believe that a promising way to find a possible counterexample should come from the study of the inner rates associated with $\mathfrak M$-primary ideals initiated by Cherik \cite{Cherik2024}.

We do not expect the results we have obtained to extend to higher dimension;
we refer to Remark~\ref{remark:higher_dimension} for a discussion of the phenomenons that arise in that case.

}

\subsection*{Acknowledgments} 
We are very grateful to Michel Boileau, who showed us several references on the topology of lens spaces, to Patrick Popescu-Pampu, for his insightful comments on a previous version of this work, and to David Trotman, who kindly shared his preprint \cite{Trotman2024}.
We also thank Lev Birbrair for fruitful discussions and the anonymous referee for his thourough reading of our manuscript.
David Trotman kindly shared with us his preprint \cite{Trotman2024} a few months before his passing.
His work and ideas have been a great source of inspiration, and we deeply regret that we can no longer thank him personally.

Our work was partially supported by the project \emph{``Aspects tropicaux des singularités'' (SINTROP)} of the French \emph{Agence Nationale de la Recherche} (project ANR-22-CE40-0014).
A CC-BY public copyright license has been applied by the authors to the present document and will be applied to all subsequent versions up to the Author Accepted Manuscript arising from this submission, in accordance with the grant’s open access conditions and the authors' principles.

\section{Proof of Theorems~\ref{thm:oriented_topological type for complex surfaces} and~\ref{thm:topological type for complex surfaces}} 

As we mentioned in the introduction, what we really have to prove for both Theorems~\ref{thm:oriented_topological type for complex surfaces} and~\ref{thm:topological type for complex surfaces} is the implication $\ref{or_homeo} \Rightarrow \ref{or_link}$.
We begin, with the following result, by treating separately a special case for which that implication for Theorems~\ref{thm:oriented_topological type for complex surfaces} and \ref{thm:topological type for complex surfaces} (as well as the whole of Theorem~\ref{thm:bilipschitz type for complex surfaces}, in fact), can be proven simultaneously.
It is a simple consequence of results of Neumann, Waldhausen, and Seifert--Threlfall, and as such it is certainly known to the experts, but we believe it useful to record it as a separate statement and write down a complete proof.

\begin{proposition}\label{prop:special_case}
	Let $(X,0)$ and $(Y,0)$ be two locally homeomorphic germs of isolated complex surface singularities, and assume that the link $\Link(X,0)$ of $(X,0)$ is neither a lens space nor a torus bundle over the circle.
	Then the links $\Link(X,0)$ and $\Link(Y,0)$ are orientation-preserving homeomorphic.
\end{proposition}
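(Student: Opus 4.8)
The plan is to transport a homeomorphism of germs to one of links, and then to rule out that this homeomorphism of links is forced to reverse orientation. First I would record that a local homeomorphism $(X,0)\to(Y,0)$ induces a homeomorphism $\phi\colon\Link(X,0)\to\Link(Y,0)$. Indeed, by the Conical Structure Theorem each germ is homeomorphic to the cone over its link; the singular point is the unique point without a Euclidean neighborhood, hence is preserved, and the link is recovered topologically as the boundary of a small regular neighborhood of that point. If $\phi$ preserves orientation there is nothing to prove, so the whole content of the statement lies in the case where the homeomorphism we are handed reverses orientation, that is, where $\Link(X,0)$ is orientation-preserving homeomorphic to $-\Link(Y,0)$, the link of $(Y,0)$ with reversed orientation.

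Arguing by contradiction within this case, I would suppose that no orientation-preserving homeomorphism between the links exists, so that $\phi$ realizes an orientation-preserving identification $\Link(X,0)\cong -\Link(Y,0)$. Since $-\Link(Y,0)$ is then homeomorphic to the singularity link $\Link(X,0)$, both orientations of $\Link(Y,0)$ occur as singularity links, and by Neumann \cite[Theorem~1]{Neumann1981} these are irreducible graph manifolds whose oriented homeomorphism type is encoded by a \emph{negative definite} minimal plumbing graph (\cite[Theorem~2]{Neumann1981}). Reversing orientation negates the self-intersection numbers, so the minimal plumbing graph of $-\Link(Y,0)$, after applying Neumann's plumbing calculus, must be simultaneously negative definite---because it coincides with that of the singularity link $\Link(X,0)$---and the reduction of the positive definite graph obtained from that of $\Link(Y,0)$. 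The heart of the proof is to show that this cannot happen once $\Link(X,0)$ is neither a lens space nor a torus bundle over the circle.

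To establish this impossibility I would exploit the geometry of the Seifert pieces appearing in the Jaco--Shalen--Johannson decomposition of the link. Each Seifert fibered building block carries a fibration that is canonical up to isotopy, and by the classification of Seifert fibered spaces through their fundamental group, due to Seifert--Threlfall and Waldhausen, this fibration together with its Seifert invariants is determined by the underlying topology. For a singularity link this forces the relevant orbifold Euler numbers to have a fixed (negative) sign, whereas the reversed orientation would impose the opposite sign; comparing these signs across the decomposition, as in Neumann \cite[Section~7]{Neumann1981}, produces the contradiction and hence the desired orientation-preserving homeomorphism of links.

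The main obstacle is precisely this control of orientation in the last step, and it is exactly here that the two families must be excluded. For a lens space one has $-L(p,q)\cong L(p,p-q)$, so both orientations are Hirzebruch--Jung links and the sign argument collapses; an entirely analogous degeneracy occurs for cusp singularities, whose torus-bundle links carry a Sol geometry admitting orientation-reversing self-homeomorphisms. Away from these two families the negative definiteness of the resolution rigidifies the orientation, and making this rigidity precise---by combining Neumann's calculus with the fundamental-group classification of Seifert manifolds---is the delicate technical point of the argument.
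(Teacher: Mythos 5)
There is a genuine gap at the very first step of your argument, and unfortunately it is exactly where the difficulty of the statement lies. You assert that a local homeomorphism $(X,0)\to(Y,0)$ induces a homeomorphism $\Link(X,0)\to\Link(Y,0)$, justifying this by saying that the link is ``recovered topologically as the boundary of a small regular neighborhood'' of the cone point. This is not a valid argument: the boundary of a regular neighborhood is not a topological invariant of the germ, and more to the point, two cones can be homeomorphic (as germs at the vertex) without their bases being homeomorphic. This failure is not hypothetical---there exist pairs of homeomorphic real analytic germs whose links are \emph{not} homeomorphic (see \cite{King1976}, cited in Remark~\ref{remark:higher_dimension}). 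For complex surface germs the implication you assume is true, but proving it is precisely the content of the implication $\ref{homeo}\Rightarrow\ref{link}$ of Theorem~\ref{thm:topological type for complex surfaces}; in general it requires the $h$-cobordism construction of Lemma~\ref{lem:cobordism} together with Turaev's theorem \cite[Theorem~1.4]{Turaev1988}. Having assumed it, you reduce the proposition to a statement about orientations only, which misrepresents where the hypotheses are used: you claim the exclusion of lens spaces and torus bundles is needed only ``to rule out that this homeomorphism of links is forced to reverse orientation,'' whereas the exclusion of lens spaces is needed already to obtain a homeomorphism of links at all.

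The way around this, which is the route the paper takes, is to extract from the germ homeomorphism only the weaker information that survives: since a punctured cone deformation retracts onto its base, the homeomorphism between $X\setminus\{0\}$ and $Y\setminus\{0\}$ gives an isomorphism $\pi_1\big(\Link(X,0)\big)\cong\pi_1\big(\Link(Y,0)\big)$. One then invokes the fact that singularity links are irreducible graph manifolds \cite[Theorem~1]{Neumann1981}, and that within this class the fundamental group determines the (unoriented) homeomorphism type \emph{except for lens spaces}: by Waldhausen \cite[Corollary~6.5]{Waldhausen1968} when $\pi_1$ is infinite, and by the Seifert--Threlfall classification \cite[\S6.2, Theorem~2]{Orlik1972} when $\pi_1$ is finite non-cyclic. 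This is the step your proposal has no substitute for, and it is where the lens space hypothesis genuinely enters (a finite cyclic fundamental group does not pin down $L(p,q)$). Your second part---that outside the two excluded families any homeomorphism of singularity links must preserve orientation---is essentially a sketch of the proof of \cite[Theorem~3]{Neumann1981} via negative definiteness and signs of Euler numbers; that part is correct in spirit and can simply be cited, but it cannot carry the proof on its own without the fundamental-group argument above.
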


\begin{proof}
As discussed at the end of the introduction, since the normalization gives a homeomorphism both at the level of germs and at the level of links, we can assume without loss of singularities that $(X,0)$ and $(Y,0)$ are normal.
By \cite[Theorem~3]{Neumann1981} the only normal surface singularity links such that by reversing the orientation we obtain another normal surface singularity link are lens spaces, which are links of Hirzebruch--Jung singularities, or torus bundles over the circle, which are links of cusp singularities.
Therefore, outside of these two special cases any homeomorphism between $\Link(X,0)$ and $\Link(Y,0)$ necessarily preserves the orientation.

Therefore, it remains to show that the two links are homeomorphic.
If $\phi \colon (X,0) \to (Y,0)$ is a local homeomorphism,
since $\phi(0) = 0$ then the map $\phi$ restricts to a local homeomorphism between $X \setminus \{0\}$ and $Y \setminus \{0\}$. 
As the germs $(X,0)$ and $(Y,0)$ are locally homeomorphic to the cones over their respective links, with vertex at $0$, and a punctured cone retracts by deformation onto its base, this implies that the links $L(X,0)$ and $L(Y,0)$ have isomorphic fundamental groups.

Now, as mentioned in the introduction, by \cite[Theorem~1]{Neumann1981} the singularity links $L(X,0)$ and $L(Y,0)$ are irreducible graph 3-manifolds in the sense of Waldhausen, with Seifert components having orientable bases.
In this category, except for lens spaces (which are the Seifert manifolds with finite cyclic fundamental group), the fundamental group determines the homeomorphism class (without keeping track of the orientations):
this follows from a result of Waldhausen (\cite[Corollary 6.5]{Waldhausen1968}) when the fundamental group is infinite, and from the computation of the fundamental group of such manifolds by Seifert--Threlfall (see the complete classification in the case of finite $\pi_1$ in \cite[\S6.2, Theorem~2]{Orlik1972}) for Seifert manifolds with finite but not cyclic fundamental group.
This completes the proof of the proposition.
\end{proof}

This proves that, outside of two special cases listed in the statement, part~$\ref{homeo}$ of Theorem~\ref{thm:topological type for complex surfaces} (which is weaker than~$\ref{or_homeo}$ of Theorem~\ref{thm:oriented_topological type for complex surfaces}) implies part~$\ref{or_link}$ of~\ref{thm:oriented_topological type for complex surfaces} (and therefore, also part~$\ref{link}$ of~\ref{thm:topological type for complex surfaces}).
To complete the proofs of the two theorems, we can now assume that $\Link(X,0)$ is either a lens space or a torus bundle over the circle.
Observe that, by the same arguments of classification via the fundamental group appearing in the proof of the lemma, the same is then true for $\Link(Y,0)$.
Furthermore, as in the proof of Proposition~\ref{prop:special_case}, we can assume that $(X,0)$ and $(Y,0)$ are normal.
This means that either $(X,0)$ and $(Y,0)$ are both Hirzebruch--Jung singularities, or that both are cusp singularities.
In either of the two cases, the oriented result, by which we mean the implication  $\ref{or_homeo} \Rightarrow \ref{or_link}$ of Theorem~\ref{thm:oriented_topological type for complex surfaces}, implies the non-oriented one.
Indeed, given any homeomorphism $\phi$ between the two isolated surface germs we can, if necessary, change the orientation of $(Y,0)$ (the fact that, if $(Y,0)$ belongs to one of the two classes of singularities we reduced ourselves to studying, then $(Y,0)$ with reversed orientation is still a singularity germ of the same class comes, again, from \cite[Theorem~3]{Neumann1981}) to assume that $\phi$ is orientation preserving, obtaining an (orientation-preserving, although that does not matter) homeomorphism between the two links $\Link(X,0)$ and $\Link(Y,0)$.

We reduced ourselves to proving the oriented result for lens spaces and torus bundles over the circle.
For this, we make use of the following topological result.
Let us first introduce some notation: if $M$ is an oriented topological manifold, we denote by $C(M)$ the topological cone over $M$, that is, $C(M) =M \times \R^+/ \{(x,0)\sim (y,0)\}$ and by $p_M \colon C(M) \to \R^+$ the map induced by the canonical projection on the second factor of $M \times  \R^+$.  
Given $r >0$, we set $M_r = p_M^{-1}(r )$ and  $B^{C(M)}_r = p_M^{-1}\big([0, r]\big)$.
Note that $C(M)$ admits a natural orientation which is induced by the one of $M$; every $M_r$ is then naturally oriented as the boundary of $B^{C(M)}_r\subset C(M)$.

\begin{lemma} \label{lem:cobordism}
	Let $M$ and $N$ be  two  $n$-dimensional oriented manifolds.
	Assume that there exists a orientation-preserving homeomorphism $\phi \colon C(M) \to C(N)$ such that $\phi(0)=0$.
	Let $\epsilon, \eta  \in \R^+$ such that $B^{C(N)}_{\epsilon}  \subset \int \big(\phi(B^{C(M)}_{\eta})\big)$.
	Then the $(n+1)$-dimensional manifold 
	\[
	\phi\big(B^{C(M)}_{\eta}\big)  \setminus \mathrm{int}\big(B^{C(N)}_{\epsilon}\big)
	\]
	realizes an oriented $h$-cobordism between its two boundary components $\phi(M_{\eta})$ and ${N}_{\epsilon}$. 
\end{lemma}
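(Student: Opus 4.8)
The plan is to show that the compact region $W = \phi(B^{C(M)}_{\eta}) \setminus \int(B^{C(N)}_{\epsilon})$ is an oriented manifold whose boundary is the disjoint union of the two closed $n$-manifolds $\phi(M_\eta)$ and $N_\epsilon$, and that both inclusions $\phi(M_\eta)\hookrightarrow W$ and $N_\epsilon\hookrightarrow W$ are homotopy equivalences; this is precisely the assertion that $W$ is an oriented $h$-cobordism. To fix notation I would write $A = \phi(B^{C(M)}_{\eta})$ and $B = B^{C(N)}_{\epsilon}$, so that $B \subset \int(A)$ and $W = A \setminus \int(B)$. Since $\phi$ is a homeomorphism with $\phi(0)=0$, it carries the bicollared hypersurface $M_\eta \subset C(M)\setminus\{0\} = M\times(0,\infty)$ to the bicollared hypersurface $\phi(M_\eta)=\partial A$, while $N_\epsilon = \partial B$ is manifestly bicollared in $C(N)\setminus\{0\} = N\times(0,\infty)$. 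Hence $W$ is a compact topological manifold with $\partial W = \phi(M_\eta)\sqcup N_\epsilon$, and it inherits an orientation from $C(N)$ which, as $\phi$ is orientation-preserving, is compatible with the prescribed boundary orientations. Everything then reduces to the two homotopy equivalences.

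The main obstacle is that $\phi$ does not respect the two radial structures. The cone structure of $C(N)$ lets one retract $N\times(0,\infty)$ onto any level $N_r$, and transporting the cone structure of $C(M)$ through $\phi$ lets one retract $A\setminus\{0\}$ onto $\phi(M_\eta)$, but neither of these radial retractions preserves $W$: in the coordinates of $C(N)$ the set $A$ need not be star-shaped about $0$, and symmetrically $B$ need not be for the coordinates transported from $C(M)$. I would circumvent this by never retracting inside $W$ directly. Instead I retract within a larger ambient region on which \emph{one} of the two cone structures is clean, arranging the retraction to be stationary on $W$, and then invoke the two-out-of-three property of homotopy equivalences. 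The only tool required is the pasting lemma applied to a deformation that is the identity on one closed piece and a radial retraction on a complementary closed piece, the two agreeing on their common boundary component.

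For the outer boundary I would work inside $A\setminus\{0\}$. On the one hand, $A\setminus\{0\} = \phi\big(B^{C(M)}_{\eta}\setminus\{0\}\big) = \phi(M\times(0,\eta])$ deformation retracts, through $\phi$, onto $\phi(M_\eta)$. On the other hand, writing $A\setminus\{0\} = W\cup(B\setminus\{0\})$ with overlap $W\cap(B\setminus\{0\}) = \partial B = N_\epsilon$, the radial retraction of $B\setminus\{0\} = N\times(0,\epsilon]$ onto $N_\epsilon$, pasted with the identity on $W$, deformation retracts $A\setminus\{0\}$ onto $W$. Thus the inclusions $\phi(M_\eta)\hookrightarrow A\setminus\{0\}$ and $W\hookrightarrow A\setminus\{0\}$ are both homotopy equivalences; since the first factors as $\phi(M_\eta)\hookrightarrow W\hookrightarrow A\setminus\{0\}$, the two-out-of-three property forces $\phi(M_\eta)\hookrightarrow W$ to be a homotopy equivalence.

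For the inner boundary I would argue symmetrically inside $C(N)\setminus\int(B) = N\times[\epsilon,\infty)$, which retracts radially onto $N_\epsilon$. Writing $C(N)\setminus\int(B) = W\cup\big(C(N)\setminus\int(A)\big)$ with overlap $\partial A = \phi(M_\eta)$, and observing that $C(N)\setminus\int(A) = \phi\big(M\times[\eta,\infty)\big)$ deformation retracts, through $\phi$, onto $\phi(M_\eta)$, I paste this retraction with the identity on $W$ to deformation retract $C(N)\setminus\int(B)$ onto $W$. Hence both $N_\epsilon\hookrightarrow C(N)\setminus\int(B)$ and $W\hookrightarrow C(N)\setminus\int(B)$ are homotopy equivalences, and since the former factors as $N_\epsilon\hookrightarrow W\hookrightarrow C(N)\setminus\int(B)$, two-out-of-three yields that $N_\epsilon\hookrightarrow W$ is one as well. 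With both boundary inclusions homotopy equivalences and the orientation inherited from $C(N)$, the manifold $W$ is the desired oriented $h$-cobordism. I expect the only delicate points to be the set-theoretic verifications that the two decompositions are into closed subsets meeting exactly along the relevant boundary component and that $W$ is genuinely a manifold with boundary (via the bicollar transported by $\phi$); the homotopy-theoretic heart of the argument is then the clean two-out-of-three step.
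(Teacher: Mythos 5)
Your proof is correct, and it takes a genuinely different route from the paper's. Both arguments exploit the same geometric mechanism---the radial flow of $C(N)$ and the radial flow of $C(M)$ transported through $\phi$---but the homotopy-theoretic packaging differs. The paper fixes auxiliary radii $\eta'<\eta$ and $\epsilon'<\epsilon<\epsilon''$, proves that $(i)_*\colon\pi_k(N_\epsilon)\to\pi_k(W)$ is injective and surjective for every $k$ by exhibiting radial deformation retractions as left inverses of suitable compositions of inclusions, runs an analogous argument for $\phi(M_\eta)\hookrightarrow W$, and finally invokes Whitehead's theorem to upgrade the isomorphisms on homotopy groups to homotopy equivalences. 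You never pass through homotopy groups: you construct two explicit strong deformation retractions onto $W$ itself---of $A\setminus\{0\}=W\cup(B\setminus\{0\})$, by pasting the identity on $W$ with the radial retraction of $B\setminus\{0\}=N\times(0,\epsilon]$ onto $N_\epsilon$, and of $C(N)\setminus\int(B)=W\cup\big(C(N)\setminus\int(A)\big)$, by pasting the identity on $W$ with the $\phi$-conjugated radial retraction of $C(N)\setminus\int(A)=\phi\big(M\times[\eta,\infty)\big)$ onto $\phi(M_\eta)$---and then conclude by the two-out-of-three property. Your set-theoretic checks are sound: $W\cap(B\setminus\{0\})=N_\epsilon$ and $W\cap\big(C(N)\setminus\int(A)\big)=\phi(M_\eta)$ both follow from $B\subset\int(A)$, which forces $\partial A\cap B=\emptyset$, and all the pieces being pasted are closed in their unions, so the pasting lemma applies and each retraction stays within its piece. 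What your route buys: it is leaner (no nested radii, no basepoint bookkeeping for $\pi_k$) and it avoids Whitehead's theorem, whose application to the map $N_\epsilon\to W$ tacitly requires knowing that the compact topological manifold $W$ has the homotopy type of a CW complex (true, since compact manifolds are ANRs, but a genuine extra input that the paper leaves implicit); your argument also addresses, via bicollars, why $W$ is a manifold with the stated boundary, a point the paper's proof passes over. The one caveat, common to both proofs, is that compactness of $M$ and $N$ (automatic in the intended application to singularity links) is what makes $W$ a compact cobordism, and that the orientation statement is handled in the same implicit way in both.
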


While it goes a bit farther than comparing the fundamental groups of the two links, this lemma is still quite elementary and is probably well-known.
However, we haven't found it in the literature and therefore we provide a complete proof.

\begin{proof}[Proof of Lemma~\ref{lem:cobordism}]  
	Set $W=  \phi(B^{C(M)}_{\eta})   \setminus \int  B^{C(N)}_{\epsilon} $.  
	Let us first prove that the inclusion $i \colon N_{\epsilon}   \hookrightarrow W$ is a homotopy equivalence. 
	For this,   let us choose  $\epsilon', \epsilon''$ and $\eta'$ such that $0<\eta' < \eta$,  $0<\epsilon'< \epsilon < \epsilon''$, and the following inclusions hold:
	\begin{equation*}
		B^{C(N)}_{\epsilon'}   \subset  \int\big(\phi(B^{C(M)}_{\eta'})\big),
		\quad			
		\phi(B^{C(M)}_{\eta'}) \subset  \int (B^{C(N)}_{\epsilon}),
		\quad
		\phi(B^{C(M)}_{\eta})  \subset   \int (B^{C(N)}_{\epsilon''}).		
	\end{equation*}
	See Figure~\ref{fig:cones} below for an illustration of the situation.
	%
	%
	%
	%
	%
	%
	%
	%
	%
	%
	%
	%
	%
	%
	%
	%
	%
	%
	%
	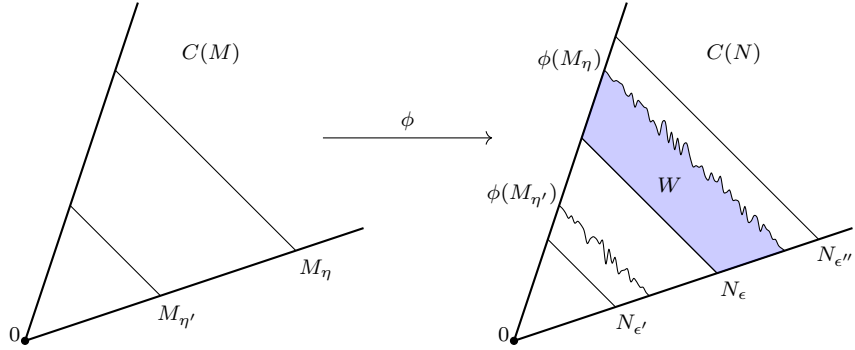
\begin{figure}[h]
		\begin{center}
			\begin{tikzpicture}
				\begin{scope}[scale=0.45]
					\draw[fill] (0,0)circle(3pt);
					
					\draw[thick](0,0)--+(10,10/3);					\draw[thick](0,0)--+(10/3,10);		
					
					\draw[thin] (4/3,4)--(4,4/3);
					\draw[thin] (8/3,8)--(8,8/3);
					
					\draw[thin,->] (8.8,6)--+(5,0);  
					
					\begin{footnotesize}
						\node(a)at(11.3,6.5){$\phi$};
						
						\node(a)at(-.3,.2){$0$};
						\node(a)at(5.5,8.5){$C(M)$};
						
						\node(a)at(4.5,.7){$M_{\eta'}$};
						\node(a)at(8.6,2.1){$M_{\eta}$};
					\end{footnotesize}

				\end{scope}

				\begin{scope}[xshift=6.5cm,scale=0.45]
					\draw[fill] (0,0)circle(3pt);
					
					\draw[thick](0,0)--+(10,10/3);					\draw[thick](0,0)--+(10/3,10);		
					
					\draw[thin] (1,3)--(3,1);
					\draw[thin] (2,6)--(6,2);
					\draw[thin] (3,9)--(9,3);

					\begin{scope}[xshift=1.3333cm]
						\draw plot [domain=0:8/3,samples=30,smooth] (\x,{4-\x+0.2*rand*\x*(8/3-\x)});
					\end{scope}
					
					\begin{scope}[xshift=2.6666cm]
						\draw[fill=blue,fill opacity=0.2] plot [domain=0:16/3,samples=60,smooth] (\x,{8-\x+0.05*rand*\x*(16/3-\x)})--(10/3,2)--(-2/3,6)--(0,8);
					\end{scope}
					
					\begin{footnotesize}
						\node(a)at(-.3,.2){$0$};
						\node(a)at(6.5,8.5){$C(N)$};
						
						\node(a)at(.25,4.3){$\phi(M_{\eta'})$};
						\node(a)at(1.65,8.3){$\phi(M_{\eta})$};
						
						\node(a)at(4.6,4.6){$W$};
						
						\node(a)at(3.5,.5){$N_{\epsilon'}$};
						\node(a)at(6.5,1.5){$N_{\epsilon}$};
						\node(a)at(9.5,2.5){$N_{\epsilon''}$};
					\end{footnotesize}
				\end{scope}
			\end{tikzpicture} 
		\end{center}
		\caption{An illustration of the construction used throughout the proof, with the subset $W$ in blue in the cone $C(N)$ on the right-hand side.}
		\label{fig:cones}
	\end{figure}
	Fix $k\geq 1$ and consider the morphism $(i)_* \colon \pi_k(N_{\epsilon} ) \to \pi_k(W)$ induced by $i$.
	Let $r \colon   B^{C(N)}_{\epsilon''} \setminus \int( B^{C(N)}_{\epsilon}) \longrightarrow  N_{\epsilon} $ be  the  radial flow retraction in the cone $C(N)$.  
	Since $r$  is a  strong deformation retraction, it  induces an  isomorphism  
	$$r_*\colon \pi_k\big(B^{C(N)}_{\epsilon''} \setminus \int( B^{C(N)}_{\epsilon})\big) \to  \pi_k(N_{\epsilon} ).$$
	Consider the two inclusions
	\begin{equation*} 
		N_{\epsilon} 
		\lhook\joinrel\xrightarrow{\;i\;\,} 
		W
		\lhook\joinrel\xrightarrow{\,i_2\,}  
		B^{C(N)}_{\epsilon''} \setminus \int(  B^{C(N)}_{\epsilon}).
	\end{equation*}
	Since $r$ is a left inverse for the composition $i_2 \circ i$, this implies that  the composition 
	\begin{equation*} 
		\pi_k(N_{\epsilon}  )
		\stackrel{(i)_*}{\longrightarrow} 
		\pi_k(W)
		\stackrel{(i_2)_*}{\longrightarrow} 
		\pi_k\big( B^{C(N)}_{\epsilon''} \setminus \int(  B^{C(N)}_{\epsilon})\big)
	\end{equation*}
	is the isomorphism $(r_*)^{-1}$. In particular, $(i)_*$ is injective. 
	
	Let us now prove that $(i)_*$ is surjective.  Consider the $4$-manifold
	\[
	W' = \phi(B^{C(M)}_{\eta'})   \setminus \int  B^{C(N)}_{\epsilon'}
	\]
	and the two inclusions
	\begin{equation*} 
		W' 
		\lhook\joinrel\xrightarrow{\,i_3\,} 
		B^{C(N)}_{\epsilon} \setminus \int ( B^{C(N)}_{\epsilon'})
		\lhook\joinrel\xrightarrow{\,i_4\,} 
		W \cup     B^{C(N)}_{\epsilon} \setminus \int ( B^{C(N)}_{\epsilon'}),
	\end{equation*}	
	and denote by $r'$ the radial flow retraction
	\[r' \colon  B^{C(M)}_{\eta} \setminus \int \big(\phi^{-1}(B^{C(N)}_{\epsilon'})\big) \longrightarrow  B^{C(M)}_{\eta'} \setminus \int \big(\phi^{-1}(B^{C(N)}_{\epsilon'})\big)
	\]
	in the cone $C(M)$. 
	Then the composition 
	\[
	R=\phi \circ r' \circ \phi^{-1} \colon W \cup    B^{C(N)}_{\epsilon} \setminus \int ( B^{C(N)}_{\epsilon'}) 
	\longrightarrow
	W'
	\]
	is a strong deformation retraction.
	Since $R$ is a left  inverse for the composition $i_4 \circ i_3$, this implies that for every $k \geq 0$ the induced morphism 
	\[
	(i_4)_* \colon \pi_k\big( B^{C(N)}_{\epsilon} \setminus \int ( B^{C(N)}_{\epsilon'})\big)
	\longrightarrow
	\pi_k( W \cup   B^{C(N)}_{\epsilon} \setminus \int ( B_{\epsilon'}^{C(N)})
	\] 
	is surjective. 
	
	Using again the radial flow in $C(N)$, we get two strong deformation retractions $r' \colon  B^{C(N)}_{\epsilon} \setminus \int ( B^{C(N)}_{\epsilon'})  \to  N_{\epsilon}$ and $r'' \colon  W \cup     B^{C(N)}_{\epsilon} \setminus \int ( B^{C(N)}_{\epsilon'}) \to W$. 
	Therefore, the two injections $i_5 \colon N_{\epsilon} {\hookrightarrow} B^{C(N)}_{\epsilon} \setminus \int ( B^{C(N)}_{\epsilon'})$ and $i_6 \colon W   {\hookrightarrow} W \cup B^{C(N)}_{\epsilon} \setminus \int ( B^{C(N)}_{\epsilon'})$ induce isomorphisms $(i_5)_*$ and $(i_6)_*$ at the level of homotopy groups $\pi_k$.
	Since $i_6 \circ i = i_4 \circ i_5$, then $(i)_* =  r''_* \circ ( i_4)_*  \circ (i_5)_*$ is surjective, and therefore it is an isomorphism. 
	As this holds for every $k\geq1$, by Whitehead's theorem  \cite[Theorem~4.5]{Hatcher2002} the map $i$ is a homotopy equivalence. 
	
	Similar arguments with respect to the cone $C(M)$ prove that the inclusion $j' \colon M_{\eta}  \hookrightarrow \phi^{-1}(W) $ induces an isomorphism at the level of homotopy groups $\pi_k$.  
	Therefore, the inclusion $j = \phi^{-1} \circ j' \circ \phi \colon \phi(M_{\eta} ) \hookrightarrow W$ is a homotopy equivalence, again by Whitehead's theorem.
	This shows that $W$ realizes an oriented $h$-cobordism between $\phi(M_{\eta})$ and $ N_{\epsilon} $.
\end{proof}

\begin{proof}[End of proof of the implication $\ref{or_homeo} \Rightarrow \ref{or_link}$ of Theorem~\ref{thm:oriented_topological type for complex surfaces}]
Since $L(X,0)$ and $L(Y,0)$ are irreducible by (\cite[Theorem 1]{Neumann1981}) and $h$-cobordant by Lemma~\ref{lem:cobordism}, then they are orientation-preserving homeomorphic by \cite[Theorem~1.4]{Turaev1988} (see also \cite[Theorem~14]{Weber2018}).
This concludes the proof of the implication $\ref{or_homeo} \Rightarrow \ref{or_link}$ of Theorem~\ref{thm:oriented_topological type for complex surfaces}, and therefore by what was argued before of Theorem~\ref{thm:topological type for complex surfaces} as well, completing the proof of the two theorems.
Note that in the case of lens spaces, we could have also applied a result of Atiyah and Bott \cite[page~479]{AtiyahBott1968}, or argued that two $h$-cobordant oriented lens spaces have the same Heegaard Floer $d$-invariant, and are therefore orientation-preserving homeomorphic by \cite{Greene2013} (more generally, Greene shows that two double-branched covers over alternating links are homeomorphic if and only if they have the same $d$-invariant); another proof can be found in \cite{DoigWehrli2015}.
\end{proof}

\begin{remark}
	\label{remark:higher_dimension}
	We do not know whether Theorems~\ref{thm:oriented_topological type for complex surfaces} and \ref{thm:topological type for complex surfaces} still hold for complex germs of higher dimension, besides in the hypersurface case treated by Saeki \cite{Saeki1989}.
	However, much care is certainly needed since in general there exist pairs of homeomorphic real analytic germs whose links are not homeomorphic, (see for instance \cite{King1976}), that is for which the condition \ref{homeo} does not imply \ref{link} nor \ref{sub-homeo}.
	This is related to the facts that two manifolds of real dimension greater than 3 may be or $h$-cobordant but not homeomorphic, or homeomorphic but nor diffeomorphic.
	In fact, concrete examples of the latter phenomenon can be constructed by taking suitable products of 3-dimensional lens spaces with a sphere $S^2$.
\end{remark}

\begin{remark}\label{remark:Trotmans_proof}
As we mentioned in the introduction, for both Theorems~\ref{thm:oriented_topological type for complex surfaces} and~\ref{thm:topological type for complex surfaces} the implication $\ref{or_link}\Rightarrow\ref{or_sub-homeo}$ is proven in \cite[Theorem~3.1]{Trotman2024}.
Since that reference is not easily available at this time, let us sketch Trotman's argument, which goes as follows: triangulating the links $\Link(X,0)$
and $\Link(Y,0)$ via semialgebraic homeomorphisms yields two homeomorphic piecewise-linear 3-manifolds, which are piecewise-linear isomorphic by Moise’s theorem \cite{Moise1952}. 
We deduce a semialgebraic homeomorphism between $\Link(X,0)$
and $\Link(Y,0)$, and therefore a semi-algebraic
homeomorphism between the cones over the two links.
This gives a semialgebraic (and therefore subanalytic) homeomorphism between the germs $(X,0)$ and $(Y,0)$.
\end{remark}

\section{Bilipschitz homeomorphisms and orientation}
\label{section:bi-lipschitz}

Let us now move to the proof of Theorem~\ref{thm:bilipschitz type for complex surfaces}.
We can restrict our attention to the normal complex surface singularities that admit orientation-reversing homeomorphisms.
By the result of Neumann~\cite[Theorem~3]{Neumann1981} already used in the proof of Proposition~\ref{prop:special_case}, this means that we can assume that $(X,0)$ and $(Y,0)$ are both Hirzebruch--Jung singularities, or that both are cusp singularities.
Let us recall the precise definitions of these classes of singularities and all the facts that we need.

A \emph{Hirzebruch--Jung singularity} is a normal complex surface germ that can be obtained as the normalization of the hypersurface singularity defined by the equation $z^p-x^{p-q}y=0$ in $\C^3$, for some coprime integers $p>q\geq 1$.
We refer to \cite{Hirzebruch1953,HirzebruchNeumannKoh1971,Popescu-Pampu2007,Weber2018} for abundant details, and limit ourselves to describing what we need in the current paper.
Hirzebruch--Jung singularities are minimal singularities, so that, in particular, they are rational.
If $(X,0)$ is the Hirzebruch--Jung singularity obtained from the equation $z^p-x^{p-q}y=0$ above, then the link $\Link(X,0)$ of $(X,0)$ is the 3-dimensional lens space $L(p,q)$ (see for instance \cite[Theorem~22]{Weber2018}).
Consider the integers $b_1,b_2,\ldots,b_k\geq 2$ such that
\begin{equation}\label{eqn:continued_fraction_lens_spaces}
\frac{p}{q} = b_1-\cfrac{1}{b_2 -\cfrac{1}{\ddots - \cfrac{1}{b_k}}}\ .
\end{equation}
The minimal plumbing graph of $L(p,q)$ consists then of a chain of vertices of self-intersections $-b_1,-b_2,\ldots,-b_k$, all corresponding to rational curves, as is pictured in Figure~\ref{figure:example_lens_graph} below.
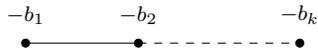
\begin{figure}[h]
	\begin{center}
		\begin{tikzpicture}
			
			\begin{scope}[scale=1.5]
	
	\draw[thin] (0,0)--(1,0);
	\draw[thin,dashed] (1,0)--(2.43,0);
	
	\draw[fill] (0,0)circle(1pt);
	\draw[fill] (1,0)circle(1pt);
	\draw[fill] (2.43,0)circle(1pt);
	
	\begin{footnotesize}
		\node(a)at(0,.25){$-b_1$};
		\node(a)at(1,.25){$-b_2$};
		\node(a)at(2.43,.25){$-b_k$};
		
		
	\end{footnotesize}
	
\end{scope}
%
%
%
%
%
%
%
%
%
%
		\end{tikzpicture} 
	\end{center}
	\caption{Minimal plumbing graph of the lens space $L(p,q)$.}
	\label{figure:example_lens_graph}
\end{figure}
Hirzebruch--Jung singularities are \emph{taut} (see \cite[Section~2.2, type~II]{Laufer1973a}, but the result goes back to Brieskorn \cite{Brieskorn1967/1968}), which means that the minimal plumbing graph of their link determines the analytic type of the singularity.
A lens space $L(p,q)$ is always homeomorphic to the lens space $L(p,p-q)$ via an orientation-reversing homeomorphism.
We note that one can describe completely the minimal plumbing graph of $L(p,p-q)$ in terms of the one of $L(p,q)$ by comparing the expansions of $p/q$ and $p/(p-q)$ as fractions as in equation~\eqref{eqn:continued_fraction_lens_spaces}; this is done in detail in \cite[Section~7]{Neumann1981}.

Similarly, \emph{cusp singularities} are the normal complex surface germs that are taut and whose minimal plumbing graph is a cycle of vertices of genus 0 with negative-definite self-intersection matrix (that is, equivalently, all vertices have self-intersection $\leq -2$ and at least one has self-intersection $\leq -3$).
Their links are precisely the surface singularity links that are torus bundles over the circle (observe that the monodromy of such bundles must then have trace at least 3---an hypothesis that is explicitly spelled out throughout \cite{Neumann1981}---as the other torus bundles over the circle cannot be realized as links of surface singularities).
We refer to \cite{Hirzebruch1973,Laufer1977,Popescu-Pampu2007} for details about cusp singularities, and to \cite[Section~2.2, type~V]{Laufer1973a} for their tautness.

\

Let us briefly recall the definitions of fundamental cycle and maximal cycle of a normal complex surface germ $(X,0)$; we refer to \cite[Section~2]{BelottodaSilvaFantiniNemethiPichon2022} for more detailed definitions 
and further references.
Let $\pi\colon(X_\pi,E_\pi)\to(X,0)$ be a good resolution of 
$(X,0)$, and denote by $\Gamma_\pi$ the dual graph of the exceptional divisor $E_\pi=\bigcup_{i\in I}E_{i}=\pi^{-1}(0)$ of $\pi$, endowed as usual with the genera $g(E_i)$ and self-intersections $E_i^2$ of the irreducible components of $E_\pi$.
Then by a classical result of Artin there exists a unique smallest divisor $Z_{\min}^{\Gamma_\pi}=\bigcup_{i\in I}a_iE_{i}$ supported on $E_\pi$ and such that $Z_{\min}^{\Gamma_\pi}\cdot E_{i}\leq0$ for every ${i\in I}$.
The divisor $Z_{\min}^{\Gamma_\pi}$ is called the \emph{fundamental cycle} of $(X,0)$ (or of $\Gamma_\pi$, since it only depends on the weighted graph), and it is positive, that is we have $a_i>0$ for every $i$.
Another relevant divisor supported on $E_\pi$ is the \emph{maximal cycle} of $(X,0)$, which is defined as $Z_{\max}^{\Gamma_\pi}(X,0)=\bigcup_{i\in I}m_iE_i$, where $m_i$ is the multiplicity of a generic linear form $h\colon (X,0)\to(\C,0)$ along the irreducible component $E_i$ of $E_\pi$ (that is, $m_i$ is the multiplicity of the maximal ideal of $(X,0)$ along $E_i$; said otherwise, $Z_{\max}^{\Gamma_\pi}(X,0)$ is the scheme-theoretic fiber $\pi^{-1}(0)$ of $\pi$ at $0$).
In general, $Z_{\max}^{\Gamma_\pi}(X,0)$ may be strictly bigger than the fundamental cycle $Z_{\min}^{\Gamma_\pi}$; let us show that this is not the case for taut singularities.

\begin{lemma}\label{lem:taut_properties_1}
	Let $(X,0)$ be a taut normal surface singularity and let $\pi$ be the minimal good resolution of $(X,0)$.
	Then:
	\begin{enumerate}
		\item The fundamental cycle $Z_{\min}^{\Gamma_\pi}$ and the maximal cycle $Z_{\max}^{\Gamma_\pi}(X,0)$ coincide.
		\item The resolution $\pi$ factors through the blowup of the maximal ideal of $(X,0)$;
	\end{enumerate}
\end{lemma}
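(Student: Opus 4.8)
The plan is to prove both assertions at once by showing that the maximal ideal $\mathfrak{m}$ of $(X,0)$ generates the invertible sheaf $\mathcal{O}_{X_\pi}(-Z_{\min}^{\Gamma_\pi})$ on the minimal good resolution. First I would record the general inequality $Z_{\min}^{\Gamma_\pi} \leq Z_{\max}^{\Gamma_\pi}(X,0)$: if $h$ is a generic linear form, then $\mathrm{div}(h \circ \pi) = Z_{\max}^{\Gamma_\pi}(X,0) + \widetilde{h}$, where $\widetilde{h}$ is the strict transform of $\{h=0\}$; intersecting with $E_i$ and using that $\mathrm{div}(h \circ \pi) \cdot E_i = 0$ while $\widetilde{h} \cdot E_i \geq 0$ shows that $Z_{\max}^{\Gamma_\pi}(X,0) \cdot E_i \leq 0$ for every $i$, so by the defining minimality of the fundamental cycle $Z_{\max}^{\Gamma_\pi}(X,0)$ dominates $Z_{\min}^{\Gamma_\pi}$. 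Next I would reformulate assertion (ii): by the universal property of the blowup, $\pi$ factors through the blowup of $\mathfrak{m}$ exactly when $\mathfrak{m}\mathcal{O}_{X_\pi}$ is an invertible ideal sheaf, that is, when the linear system generated by the generic linear forms has no base point on $X_\pi$ once its fixed divisor $Z_{\max}^{\Gamma_\pi}(X,0)$ is removed. Hence it suffices to prove $\mathfrak{m}\mathcal{O}_{X_\pi} = \mathcal{O}_{X_\pi}(-Z_{\min}^{\Gamma_\pi})$, since this is invertible (giving (ii)) and forces the fixed divisor to be $Z_{\min}^{\Gamma_\pi}$, hence $Z_{\max}^{\Gamma_\pi}(X,0) = Z_{\min}^{\Gamma_\pi}$ (giving (i)).

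To carry this out I would use Laufer's classification of taut singularities \cite{Laufer1973a} to split into two cases according to the geometric genus. In the rational case, which in particular covers the Hirzebruch--Jung singularities, the statement is classical: Artin's theory of rational surface singularities (refined by Lipman) gives the cohomological vanishing $H^1(X_\pi, \mathcal{O}_{X_\pi}(-Z_{\min}^{\Gamma_\pi})) = 0$, from which one deduces that $\mathcal{O}_{X_\pi}(-Z_{\min}^{\Gamma_\pi})$ is generated by its global sections and that these sections are precisely the pullbacks of elements of $\mathfrak{m}$, so that $\mathfrak{m}\mathcal{O}_{X_\pi} = \mathcal{O}_{X_\pi}(-Z_{\min}^{\Gamma_\pi})$ and both (i) and (ii) follow at once.

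The remaining case, that of cusp singularities, is the genuinely delicate one, and I expect it to be the main obstacle. These singularities are minimally elliptic rather than rational, so the vanishing of $H^1$ above fails and base-point-freeness must be established by hand. The favourable feature to exploit is the explicit shape of the minimal good resolution: $\Gamma_\pi$ is a cycle of rational curves with all self-intersections at most $-2$, so a direct computation shows that the fundamental cycle is reduced, $Z_{\min}^{\Gamma_\pi} = \sum_i E_i$. Building on Laufer's description of the maximal ideal of minimally elliptic singularities \cite{Laufer1977}, the plan is to exhibit, near every component and every node of the cycle, an element of $\mathfrak{m}$ whose pullback vanishes to order exactly one there; this simultaneously identifies the maximal cycle with $Z_{\min}^{\Gamma_\pi}$ and shows that the generic hyperplane sections have no common base point along $E_\pi$, yielding the invertibility of $\mathfrak{m}\mathcal{O}_{X_\pi}$. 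The hard part will be precisely this verification of base-point-freeness along the exceptional cycle without the cohomological input available in the rational case, and it is here that the tautness of the singularity—through the explicit equations it provides—has to be used in an essential way.
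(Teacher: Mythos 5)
Your proposal takes a genuinely different route from the paper, and the difference matters. The paper never appeals to Laufer's classification: it uses tautness \emph{directly}. Writing $Z_{\min}^{\Gamma_\pi}=\sum a_iE_i$ and $k_i=-Z_{\min}^{\Gamma_\pi}\cdot E_i\geq 0$, it attaches $k_i$ arrows to each vertex of $\Gamma_\pi$, observes that the resulting multilink in the link $M$ is fibered, and invokes the realization theorem of \cite{Pichon2001} to produce a normal germ $(Y,0)$ and a holomorphic function whose link is this multilink; tautness identifies $(Y,0)$ with $(X,0)$, so $(X,0)$ carries a holomorphic function $f$ with $Z(f)=Z_{\min}^{\Gamma_\pi}$, and part (i) follows from $Z_{\min}^{\Gamma_\pi}\leq Z_{\max}^{\Gamma_\pi}(X,0)\leq Z(f)$. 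Part (ii) is then obtained by a second application of the same theorem (a doubling of the link followed by a quotient argument). This works uniformly for every taut germ, with no case analysis. Your proof instead needs Laufer's classification twice: once for the (implicit, nontrivial) claim that every taut singularity is rational or a cusp, and once to run two separate arguments. Your rational case is correct and standard: Artin's theory does give $\mathfrak{m}\mathcal{O}_{X_\pi}=\mathcal{O}_{X_\pi}(-Z_{\min}^{\Gamma_\pi})$, which settles all taut rational germs, in particular the Hirzebruch--Jung ones.

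The cusp case, however, is a genuine gap, and not only because you leave its key step as a plan. The statement you propose to verify there --- that $\mathfrak{m}\mathcal{O}_{X_\pi}=\mathcal{O}_{X_\pi}(-Z_{\min}^{\Gamma_\pi})$ is invertible, i.e.\ base-point free --- is \emph{false} for some cusps, and this is visible in the very reference you cite: Laufer's results on minimally elliptic singularities \cite{Laufer1977} give $\mathfrak{m}\mathcal{O}_{X_\pi}=\mathcal{O}_{X_\pi}(-Z_{\min}^{\Gamma_\pi})$ only when $\big(Z_{\min}^{\Gamma_\pi}\big)^2\leq-2$, and produce a base point of $\mathfrak{m}\mathcal{O}_{X_\pi}$ when $\big(Z_{\min}^{\Gamma_\pi}\big)^2=-1$. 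Cusps of the latter kind exist: the cycle with weights $-3,-2,-2$ is negative definite, its fundamental cycle is the reduced cycle $Z=E_1+E_2+E_3$, and $Z^2=-1$. You can see the obstruction without quoting Laufer at all: for a normal surface germ the multiplicity equals $-\big(Z_{\max}^{\Gamma_\pi}(X,0)\big)^2$ plus the number of intersection points of the strict transforms of two generic linear forms, so your unified goal would force this cusp to have multiplicity $-Z^2=1$, i.e.\ to be smooth, which is absurd. Thus your strategy can at best recover part (i) for cusps (Laufer does show that the divisorial part of $\mathfrak{m}\mathcal{O}_{X_\pi}$ is $Z_{\min}^{\Gamma_\pi}$ there), while the step you defer as ``the hard part'' is not hard but impossible; any completion of your argument would have to renounce base-point freeness for these cusps. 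I would add that this computation puts part (ii) of the statement itself in doubt for cusps with $\big(Z_{\min}^{\Gamma_\pi}\big)^2=-1$: it is in tension with the paper's own proof as well, whose crucial step --- the holomorphy of the quotient $f=g/h$, which requires the zero locus of the realized function $g$ to literally contain that of $h$, something the realization theorem of \cite{Pichon2001} does not provide --- deserves scrutiny on exactly this class of examples. In any case, as a proof of the lemma as stated, your proposal is incomplete precisely where completeness was needed.
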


\begin{proof}
Set $Z_{\min}^{\Gamma_\pi} = \sum_{i\in I} a_i E_i$. 
For each $i \in I$, set $k_i = -E_i^2 a_i + \sum_{j \neq i} a_j E_i \cdot E_j$. 
Then, by Laufer's algorithm, we have $k_i \geq 0$ for all $i$, and, since the intersection matrix $(E_i\cdot E_j)$ is negative definite, there exists $i \in I$ with $k_i >0$. 
Now, consider the graph $ \Gamma_{\pi}$ as the plumbing graph of an  oriented plumbed  $3$-manifold $M$ homeomorphic to the link $L(X,0)$, and, for each $i$, attach $k_i$ arrows representing $k_i$ disjoint $S^1$-fibers of the $S^1$-bundle over $E_i$ defining the plumbing to the corresponding vertex of $\Gamma_\pi$.
Let  $K \subset M$ be the link  whose components are these $\sum_{i \in I} k_i$ circles.
Then $L$ is a fibered link in $M$ with a surface fiber intersecting positively each $S^1$-fiber of the plumbing, and by \cite[Theorem~5.4]{Pichon2001} (see also \cite[Theorem 2.1, (ii)$\Rightarrow$(i)]{NeumannPichon2007}), this implies that  there exists a normal surface germ $(Y,0)$ and a holomorphic function germ $f \colon (Y,0) \to (\C,0)$ such that the pair  $(M,L)$ is homeomorphic to the pair of links $\big(L(Y,0), f^{-1}(0)\big)$. Since $(X,0)$ is taut,  any such  $(Y,0)$  is analytically equivalent to $(X,0)$, and then, we can take $(Y,0) = (X,0)$ and $f \colon (X,0) \to (\C,0)$. 
Set $Z(f)= \sum_{i \in I} m_i(f) E_i$, where $m_i(f)$ is the multiplicity of $f$ along $E_i$. 
By definition of $f$, we have $m_i(f)=a_i$ for all $i$, that is, $Z(f) = Z_{\min}^{\Gamma_\pi} $. 
Since $Z_{\max}^{\Gamma_\pi}(X,0) \leq Z(f)$ by definition of the maximal cycle, and $Z_{\min}^{\Gamma_\pi} \leq Z_{\max}^{\Gamma_\pi}(X,0)$ by definition of the fundamental cycle, we deduce that 
$Z_{\max}^{\Gamma_\pi}(X,0) = Z_{\min}^{\Gamma_\pi}$. 

Assume now that  $\pi$ does not factor through the blowup of the maximal divisor of $(X,0)$.
Then there exists a base point $p \in E_\pi$ for the family of generic linear forms on $(X,0)$. 
Let $h \colon (X,0) \to (\C,0)$ be a generic linear form on $(X,0)$, so that we have $Z(h) = Z_{\max}^{\Gamma_\pi}(X,0) = Z_{\min}^{\Gamma_\pi} = \sum a_i E_i $, and let $L \subset M$ be its link. 
Let us consider the link $L \cup L'$ in $M$ obtained by  taking $2k_i$ components  instead of $k_i$ for each $i$. 
Again by \cite[Theorem~5.4]{Pichon2001}, there exists a function $g \colon (X,0) \to (\C,0)$ whose link is $(M,L \cup L')$. 
Since for every $i$ we have $m_i(g) - m_i(h) =2a_i-a_i=a_i>0$, then the quotient $f=g/h$ defined outside $0$ extends to a holomorphic germ $f \colon (X,0) \to (\C,0)$ whose strict transform by $\pi$ does not intersect the strict transform of $h$ and whose link is homeomorphic to $(M,L')$. 
In particular, $Z(f) = Z(h)$, but the strict transform of $f$ by $\pi$ does not pass through $p$. 
Therefore we have $m_{E'}(f) <  m_{E'}(h)$, where $E'$ is the exceptional curve  of the  blowup of the base point $p$. 
This contradicts the fact that $h$ is a generic linear form on $(X,0)$. 
\end{proof}

Given the specific shape of the minimal resolution graphs of the surface singularities we are working with, it is easy to verify that the fundamental cycle $Z_{\min}^{\Gamma\pi}$ of $\Gamma_\pi$ is reduced (this can be seen as the first step of the classical Laufer's algorithm to find $Z_{\min}^{\Gamma_\pi}$).
Therefore, we immediately deduce from Lemma~\ref{lem:taut_properties_1} the following fact, which we record as a separate statement for convenience. 

\begin{lemma}\label{lem:taut_properties_2}
	Let $(X,0)$ be a Hirzebruch--Jung or cusp surface singularity and let $\pi$ be the minimal good resolution of $(X,0)$.
	Then the maximal cycle $Z_{\max}^{\Gamma_\pi}(X,0)$ of $(X,0)$ is reduced.
\end{lemma}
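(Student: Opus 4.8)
We need to show that for a Hirzebruch–Jung or cusp singularity $(X,0)$, the maximal cycle $Z_{\max}^{\Gamma_\pi}(X,0)$ on the minimal good resolution $\pi$ is reduced, i.e. all its coefficients equal $1$. The preceding Lemma~\ref{lem:taut_properties_1} already tells us that $Z_{\max}^{\Gamma_\pi}(X,0) = Z_{\min}^{\Gamma_\pi}$, so it suffices to prove that the fundamental cycle itself is reduced.
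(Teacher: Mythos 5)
Your first move is the same as the paper's: by Lemma~\ref{lem:taut_properties_1} the maximal cycle equals the fundamental cycle $Z_{\min}^{\Gamma_\pi}$, so it suffices to show that $Z_{\min}^{\Gamma_\pi}$ is reduced. But your proposal stops exactly there: the reducedness of the fundamental cycle is announced as ``what it suffices to prove'' and then never proved. That verification is the actual content of the lemma --- it is the only place where the specific shape of the resolution graphs of Hirzebruch--Jung and cusp singularities is used --- so as written your argument only restates the statement modulo Lemma~\ref{lem:taut_properties_1}.

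The missing step is short, and you should carry it out. For both classes of singularities every component $E_i$ is a rational curve with $E_i^2\leq -2$, and $\Gamma_\pi$ is a chain (Hirzebruch--Jung) or a cycle (cusp), so each vertex $i$ meets at most two edges, i.e.\ $\sum_{j\neq i}E_i\cdot E_j\leq 2$. Hence the reduced cycle $Z=\sum_{i\in I}E_i$ satisfies
\[
Z\cdot E_i \;=\; E_i^2+\sum_{j\neq i}E_i\cdot E_j \;\leq\; E_i^2+2 \;\leq\; 0
\qquad\text{for every } i\in I,
\]
so $Z$ belongs to the set of positive cycles of which $Z_{\min}^{\Gamma_\pi}$ is the minimum, giving $Z_{\min}^{\Gamma_\pi}\leq Z$. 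Since every coefficient of $Z_{\min}^{\Gamma_\pi}$ is a positive integer, this forces $Z_{\min}^{\Gamma_\pi}=Z$, which is reduced. This is precisely the observation the paper makes (phrased there as ``the first step of Laufer's algorithm'' already terminating); once you add it, your proof coincides with the paper's.
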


To prove Theorem~\ref{thm:bilipschitz type for complex surfaces}, we need to show that the inner bilipschitz class of $(X,0)$ determines the dual resolution graph $\Gamma_\pi$ associated with the minimal good resolution $\pi\colon (X_\pi,E_\pi)\to (X,0)$ of $(X,0)$, weighted with its self-intersections (the genera of all exceptional components being equal to zero for Hirzebruch--Jung and cusp singularities).
Let us describe in detail what the complete inner bilipschitz invariant of Birbrair--Neumann--Pichon \cite[Theorem 1.9]{BirbrairNeumannPichon2014} consists of in the case of our singularities.

Recall that the \emph{$\mathcal L$-nodes} of $(X,0)$ are those vertices of $\Gamma_\pi$ that correspond to the exceptional components of the blowup of the maximal ideal of $(X,0)$.
Those are precisely the vertices corresponding to the components $E_i$ of $E_\pi$ that satisfy $E_i \cdot Z_{\max}^{\Gamma_\pi}(X,0)<0$.
Since the maximal cycle is reduced by Lemma~\ref{lem:taut_properties_2}, those are precisely the vertices with self-intersection less than or equal to $-3$, as well as, in the Hirzebruch--Jung case, the two vertices of valency 1 (the \emph{ends} of the graph).
Denote by $\nu_0, \ldots, \nu_s$ the $\mathcal L$-nodes of $(X,0)$, with the convention that they are ordered linearly along the graph, with $\nu_0$ and $\nu_s$ being the two ends in the Hirzebruch--Jung case, and $\nu_0=\nu_s$ in the cusp case.
Consider the integers $n_1,\ldots,n_s\geq0$ such that in the graph $\Gamma_\pi$ two consecutive $\mathcal L$-nodes $\nu_{i-1}$ and $\nu_i$  are connected by a string consisting of $n_i+1$ edges, containing in its interior $n_i\geq0$ vertices of self-intersection $-2$.
Denote by $\tilde\pi \colon (X_{\tilde\pi},E_{\tilde\pi})\to (X,0)$ the good resolution of $(X,0)$ obtained from $\pi$ as follows: for every $i=1,\ldots,s$, if $n_i$ is even, blow up once the point of $\pi^{-1}(0)$ associated with the edge at the middle of the string connecting $\nu_{i-1}$ to $\nu_i$.
This sequence of blowups modifies the resolution graph $\Gamma_\pi$, yielding a graph $\Gamma_{\tilde\pi}$ such that every string between adjacent $\mathcal L$-nodes $\nu_{i-1}$ and $\nu_i$ has a central vertex $w_i$.
The vertices $w_1,\ldots,w_s$ of $\Gamma_{\tilde\pi}$ are the so-called \emph{special $\mathcal P$-nodes} of $(X,0)$.
With each special $\mathcal P$-node $w_i$ is associated its \emph{inner rate}, which is the rational number $(n_i+3)/2$ (see Examples 15.5 and 15.6 in \cite{BirbrairNeumannPichon2014}).

Let us now describe the so-called {thick-thin decomposition} of $(X,0)$.
For each component $E_\nu$ of $E_{\tilde\pi}$ let $N(E_\nu)$ be a small closed tubular neighborhood of $E_{\nu}$ in $X_{\tilde\pi}$.  
The \emph{thick parts} of $(X,0)$ are then the $s$ subgerms $Y_i=\tilde\pi\big(N(E_{\nu_i})\big)$, for $i=1,\ldots,s$, where we denoted by $E_{\nu_i}$ the exceptional component of $E_{\tilde\pi}$ associated with the $\mathcal L$-node $\nu_i$.
The \emph{thin parts} of $(X,0)$ are the subgerms $Z_1,\ldots,Z_s$ obtained, in a similar way, as follows:
if we denote by $E_1,\ldots,E_{l_i}$ the components of $E_{\tilde\pi}$ corresponding to the $(-2)$-curves in the interior of the string connecting $\nu_{i-1}$ to $\nu_i$, then
\(
Z_i = \tilde\pi\Big(\overline{\big(\bigcup_{j=1}^{l_i}N(E_j)\big)\setminus\big(N(E_{\nu_{i-1}})\cup N(E_{\nu_{i}})\big)}\Big).
\)
The decomposition of the germ $(X,0)$ as the union of the semi-algebraic germs 
\(
(X,0) = \bigcup_{i=0}^s (Y_i,0) \cup \bigcup_{i=1}^s(Z_i,0),
\)
is called in \cite{BirbrairNeumannPichon2014} the \emph{thick-thin decomposition} of $(X,0)$.

Given a small $\epsilon>0$, the thick-thin decomposition induces a decomposition of the link $X^{(\epsilon)}=X\cap S_\epsilon \cong \Link(X,0)$ as the union $X^{(\epsilon)} = \bigcup_{i=0}^s Y_i^{(\epsilon)} \cup  \bigcup_{i=1}^s Z_i^{(\epsilon)}$.
Now, let $h \colon (X,0) \to (\C,0)$ be a generic linear form, let $K = X^{\epsilon} \cap h^{-1}(0)$ be its link and let $\phi = \frac{h}{|h|} \colon  X^{(\epsilon)} \setminus K \to S^1$ be its Milnor fibration.
It is a locally trivial fibration which defines an open book decomposition of $X^{(\epsilon)} $ with binding the link $K$.
By genericity of $h$, the curve germ $\big(h^{-1}(0),0\big)$ is contained in the thick part of $(X,0)$, so that $K$ is contained in the union $\bigcup_{i=0}^s Y_i^{(\epsilon)}$ and, for each $i=1, \ldots, s$, the restriction $\zeta_i  = \phi_{| Z_i^{(\epsilon)} }\colon  Z_i^{(\epsilon)} \to S^1$ of $\phi$ to $Z_i^{(\epsilon)}$ is a locally trivial fibration.

The Classification Theorem of Birbrair--Neumann--Pichon \cite[Theorem 1.9]{BirbrairNeumannPichon2014}, specialized to our singularities, tells us that the inner bilipschitz class of $(X,0)$ determines (and is determined by) the  following data:
\begin{enumerate}
	\item The decomposition, up to homeomorphism, of the  link $X^{(\epsilon)}$ as the union $X^{(\epsilon)} = \bigcup_{i=0}^s Y_i^{(\epsilon)} \cup  \bigcup_{i=1}^s Z_i^{(\epsilon)}$; 
	\item\label{condition_foliation} For each $i = 1, \ldots, s$, the homotopy class of the foliation by fibers of the map $\zeta_i  \colon  Z_i^{(\epsilon)}  \to S^1$; 
	\item For each $i=1,\ldots, s$, the inner rate of the $P$-node $w_i$.
\end{enumerate}

Since the inner rate of the $P$-node $w_i$ is equal to $\frac{n_i + 3}{2}$, the inner bilipschitz class of $(X,0)$ determines each $n_i$.

It remains to show that the bilipschitz class of $(X,0)$ also determines the self-intersection of the exceptional curves $E_{\nu_1},\ldots,E_{\nu_s}$ in $X_{\tilde\pi}$ (and therefore in $X_{\pi}$, since we know explicitly how to pass from $\pi$ to $\tilde\pi$) for each $L$-node $\nu_i$.
This will be given to us by the datum \ref{condition_foliation} of the Classification Theorem above.

\

Denote by $e_i\leq-2$ the self-intersection of the exceptional curve $E_{\nu_i}$ in $X_{\tilde\pi}$.
Since the maximal cycle of $Z_{\max}^{\Gamma_\pi}(X,0)$ is reduced by Lemma~\ref{lem:taut_properties_2}, then the curve $E_{\nu_i}$ is reduced in $X_{\tilde{\pi}}$ as well, and the part of the strict transform of $h=0$ to $X_{\tilde{\pi}}$ that intersects $E_{\nu_i}$ consists of exactly $-e_i-v_i$ curvettes of $E_{\nu_i}$, where $v_i \in \{1,2\}$ is the valency of the vertex $\nu_i$ in $\Gamma_{\tilde{\pi}}$.
Therefore, $K_i = K \cap Y_i^{(\epsilon)}$ consists of $-e_i-v_i$ isotopic oriented simple closed curves in the thickened torus $Y_i^{(\epsilon)}$.

Let us first treat the case where $v_i=2$, that is where $\nu_i$ is not an end of the graph $\Gamma_{\tilde{\pi}}$.
Then $Y_i^{(\epsilon)}$ is a thickened torus $S^1 \times S^1 \times I$ which has one boundary component in common with $Z_{i-1}^{(\epsilon)}$ and the other one in common with $Z_{i}^{(\epsilon)}$.
Fix  $t \in S^1$.
Then $F_i= \phi^{-1}(t) \cap Y_i^{(\epsilon)}$  is a genus zero oriented surface with  boundary  the union  $\partial F_i = K_i \cup \gamma_{i}^- \cup \gamma_i^+$, where we write $\gamma_{i}^-=F_i  \cap Z_{i-1}^{(\epsilon)}$ and $\gamma_{i}^+=F_i  \cap Z_{i}^{(\epsilon)}$.
Therefore, the class of $\gamma_{i}^- + \gamma_{i}^+ + K_i$ vanishes in $\pi_1(Y_i^{(\epsilon)})$.

Now, since $Z_{\max}^{\Gamma_\pi}(X,0)$ is reduced, then $\phi^{-1}(t) \cap Z_{i-1}^{(\epsilon)}$ and  $\phi^{-1}(t) \cap Z_{i}^{(\epsilon)}$ are connected (they are in fact annuli) and therefore, by the datum~\ref{condition_foliation} of the Classification Theorem, the bilipschitz geometry of $(X,0)$ determines the homotopy class of their (connected) boundary curves $\gamma_{i}^-$ and $\gamma_{i}^+$, and therefore their homotopy classes in the intermediate thickened torus  $Y_i^{(\epsilon)}$. 
Since the class of $\gamma_{i}^- + \gamma_{i}^+ + K_i$ vanishes in $\pi_1(Y_i^{(\epsilon)})$, this implies that the bilipschitz geometry of $(X,0)$ determines the homotopy class of $K_i$ in  $Y_i^{(\epsilon)}$, and therefore the number $- e_i- 2$ of its connected components.
Therefore, the bilipschitz geometry of $(X,0)$ determines the self-intersection $e_i$. 

The case where $v_i=1$, that is where $(X,0)$ is Hirzebruch--Jung and $\nu_i$ is an end of its resolution graph, uses similar arguments.
Say, without loss of generality, that $i=0$.
Then $K_0 = K \cap Y_0^{(\epsilon)}$ consists of $-e_0-1$ isotopic oriented simple closed curves in the solid torus $Y_0^{(\epsilon)} \cong S^1 \times D^2$ and  $ Y_0^{(\epsilon)}$ intersects $Z_{1}^{(\epsilon)}$ along their common torus boundary component.
Then $F_0= \phi^{-1}(t) \cap Y_0^{(\epsilon)}$ has as boundary the union $K_0 \cup \gamma_0^+$, where $\gamma_0^+ $ is the connected curve $F_0 \cap Z_{1}^{(\epsilon)}$ whose homotopy class is determined by the bilipschitz geometry of $(X,0)$.
Then the bilipschitz geometry of $(X,0)$ also determines the homotopy class of $K_0$ in $Y_0^{(\epsilon)}$, and therefore the number $- e_0- 1$ of its connected components as well as the self-intersection number $e_0$.
This completes the proof of Theorem~\ref{thm:bilipschitz type for complex surfaces}.
\hfill\qed
%

\begin{remark}
	In order to prove Theorem~\ref{thm:bilipschitz type for complex surfaces} it is really necessary to use the invariance of the homotopy type of the foliations given by the Milnor fibers of a generic linear form.
	Indeed, let $(X,0)$ and $(Y,0)$ be the Hirzebruch--Jung singularities obtaining by normalizing the subgerms of $(\C^3,0)$ defined by the equations $z^{27}-x^{19}y=0$ and $z^{27}-x^{8}y=0$ respectively.
	Then the link $\Link(X,0)$ of $(X,0)$ is the lens space $L(27,8)$, while the link $\Link(Y,0)$ of $(Y,0)$, which is the lens space $L(27,19)=L(27,27-8)$, is homeomorphic to $\Link(X,0)$ via an orientation-reversing homeomorphism. 
	The minimal plumbing graphs of $\Link(X,0)$ and $\Link(X,0)$ are pictured in Figure~\ref{figure:example_explicit_lens_graphs} below.
	\begin{figure}[h!]
		\begin{center}
			\begin{tikzpicture}			
				\begin{scope}[scale=1.5]
					
					\draw[thin,>-stealth,->](0,0)--+(-.2,0.4);		
					\draw[thin,>-stealth,->](0,0)--+(0,0.4);		
					\draw[thin,>-stealth,->](0,0)--+(.2,0.4);		
					
					\draw[thin,>-stealth,->](2,0)--+(0,0.4);
					\draw[thin,>-stealth,->](3,0)--+(0,0.4);
					
					\draw[thin] (0,0)--(3,0);
					
					\draw[fill] (0,0)circle(1pt);
					\draw[fill] (1,0)circle(1pt);
					\draw[fill] (2,0)circle(1pt);
					\draw[fill] (3,0)circle(1pt);

					\begin{footnotesize}
						\node(a)at(0,-.25){$-4$};
						\node(a)at(1,-.25){$-2$};
						\node(a)at(2,-.25){$-3$};
						\node(a)at(3,-.25){$-2$};
					\end{footnotesize}

				\end{scope}
				
				\begin{scope}[xshift=7.2cm,scale=1.5]
					
					\draw[thin,>-stealth,->](2,0)--+(-.1,0.4);		
					\draw[thin,>-stealth,->](2,0)--+(.1,0.4);		
					
					\draw[thin,>-stealth,->](0,0)--+(0,0.4);
					
					\draw[thin,>-stealth,->](3,0)--+(-.1,0.4);
					\draw[thin,>-stealth,->](3,0)--+(.1,0.4);
					
					\draw[thin] (0,0)--(3,0);
					
					\draw[fill] (0,0)circle(1pt);
					\draw[fill] (1,0)circle(1pt);
					\draw[fill] (2,0)circle(1pt);
					\draw[fill] (3,0)circle(1pt);									
					
					\begin{footnotesize}
						\node(a)at(0,-.25){$-2$};
						\node(a)at(1,-.25){$-2$};
						\node(a)at(2,-.25){$-4$};
						\node(a)at(3,-.25){$-3$};
					\end{footnotesize}

				\end{scope}
			\end{tikzpicture} 
		\end{center}
		\caption{Minimal plumbing graphs of the Hirzebruch--Jung singularities $(X,0)$, on the left, and $(Y,0)$, on the right.}
		\label{figure:example_explicit_lens_graphs}
	\end{figure}
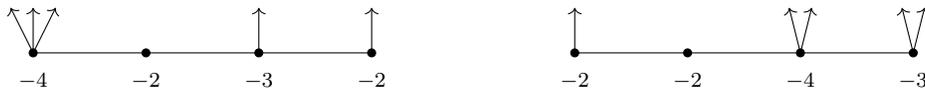
	We observe that both $(X,0)$ and $(Y,0)$ contain three thick zones (corresponding to the three exceptional components of the blowup of their maximal ideals) interpolated by two thin zones.
	Furthermore, it is easy to check that for both singularities the maximal inner rates of the two thin zones are $3/2$ and $2$ (for instance, one can apply \cite[Proposition~5.1]{BelottodaSilvaFantiniPichon2022}).
	An analogous example with two homeomorphic cusp singularities can be obtained from a careful study of the explicit description of \cite[Section~7]{Neumann1981}, see for instance Figure~\ref{figure:example_explicit_cusps_graphs} below.
	\begin{figure}[h!]
		\begin{center}
			\begin{tikzpicture}			
				\begin{scope}[scale=1]
					\draw[thin,>-stealth,->](5,0)--+(0,0.6);		
					
					\draw[thin,>-stealth,->](2,0)--+(-.3,0.6);		
					\draw[thin,>-stealth,->](2,0)--+(0,0.6);		
					\draw[thin,>-stealth,->](2,0)--+(.3,0.6);	
					
					\draw[thin,>-stealth,->](0,0)--+(-.15,0.6);
					\draw[thin,>-stealth,->](0,0)--+(.15,0.6);	
					
					\draw[thin] (0,0)--(5,0);
					\draw[thin] (0,0) to[out=-50,in=230] node [sloped,below] {} (5,0);
					
					\draw[fill] (0,0)circle(1.5pt);
					\draw[fill] (1,0)circle(1.5pt);
					\draw[fill] (2,0)circle(1.5pt);
					\draw[fill] (3,0)circle(1.5pt);
					\draw[fill] (4,0)circle(1.5pt);
					\draw[fill] (5,0)circle(1.5pt);

					\begin{footnotesize}
						\node(a)at(-.2,-.25){$-4$};
						\node(a)at(1,-.25){$-2$};
						\node(a)at(2,-.25){$-5$};
						\node(a)at(3,-.25){$-2$};
						\node(a)at(4,-.25){$-2$};
						\node(a)at(5.1,-.25){$-3$};
					\end{footnotesize}

				\end{scope}
				
				\begin{scope}[xshift=6.6cm,scale=1]
					\draw[thin,>-stealth,->](0,0)--+(0,0.6);		
					
					\draw[thin,>-stealth,->](5,0)--+(-.3,0.6);		
					\draw[thin,>-stealth,->](5,0)--+(0,0.6);		
					\draw[thin,>-stealth,->](5,0)--+(.3,0.6);	
					
					\draw[thin,>-stealth,->](2,0)--+(-.15,0.6);
					\draw[thin,>-stealth,->](2,0)--+(.15,0.6);	
					
					\draw[thin] (0,0)--(5,0);
					\draw[thin] (0,0) to[out=-50,in=230] node [sloped,below] {} (5,0);
					
					\draw[fill] (0,0)circle(1.5pt);
					\draw[fill] (1,0)circle(1.5pt);
					\draw[fill] (2,0)circle(1.5pt);
					\draw[fill] (3,0)circle(1.5pt);
					\draw[fill] (4,0)circle(1.5pt);
					\draw[fill] (5,0)circle(1.5pt);

					\begin{footnotesize}
						\node(a)at(-.2,-.25){$-3$};
						\node(a)at(1,-.25){$-2$};
						\node(a)at(2,-.25){$-4$};
						\node(a)at(3,-.25){$-2$};
						\node(a)at(4,-.25){$-2$};
						\node(a)at(5.1,-.25){$-5$};
					\end{footnotesize}

				\end{scope}
			\end{tikzpicture} 
		\end{center}
		\caption{The minimal plumbing graphs of two cusp singularities that are orientation reversing-homeomorphic, while having the same number of thick and thin zones and the same maximal inner rates.
		Note that these singularities are not hypersurfaces in $\C^3$.}
		\label{figure:example_explicit_cusps_graphs}
	\end{figure}
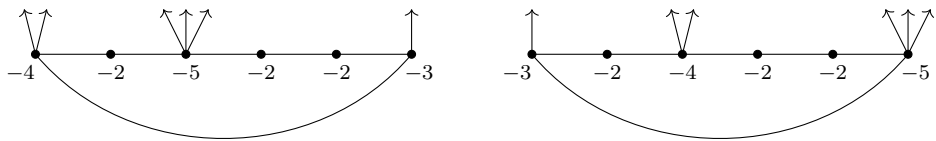
\end{remark}

\begin{remark}
	Hirzebruch--Jung singularities are minimal singularities, and therefore they are Lipschitz Normally Embedded by \cite[Theorem~1.2]{NeumannPedersenPichon2020b} (see \cite[Section~2.2]{FantiniPichon2023} for further comments).
	Some of the arguments of this section could then be replaced by the application of \cite[Propositions~2.2 and 5.1]{BelottodaSilvaFantiniPichon2022}.
	However, we appealed to different arguments because cusp singularities are not LNE in general.
	For instance, given positive integers $q$ and $r$ with $1/q+1/r<1/2$, the cusp singularity $T_{2qr}$, which is the hypersurface in $\C^3$ defined by the equation $x^2+y^q+z^r+\lambda xyz=0$ for some $\lambda\neq0$, is not LNE because its tangent cone at $0$, being defined by $x^2=0$, is not reduced (see \cite{SampaioFernandes2018} and \cite{DenkowskiTibar2019}, where the fact that LNE germs have reduced tangent cones is proved under varying degrees of generality).
	In any case, the final part of our argument, the one using the homotopy type of the foliations given by the Milnor fibers of a generic linear form, could not be simplified even for LNE singularities.
\end{remark}

\begin{remark}\label{remark:orientation_reversing_bilipschitz}
	Some of lens spaces and torus bundles over the circle admit orientation-reversing self-homeomorphisms (for a lens space $L(p,q)$, this happens precisely whenever $q^2\equiv-1$ modulo $p$).
	In this case, it is possible to construct explicitly an orientation-reversing bilipschitz homeomorphism from the corresponding Hirzebruch--Jung singularity to itself by using the bilipschitz model described in \cite[Section~11]{BirbrairNeumannPichon2014}.
	This is why Theorem~\ref{thm:bilipschitz type for complex surfaces} simply states that if an orientation-reversing homeomorphism between normal complex surface germs is bilipschitz, then another orientation-preserving homeomorphism must exist.
\end{remark}

\bibliographystyle{alpha}                              
\bibliography{biblio}

\begin{thebibliography}{BdSFNP22}

\bibitem[AB68]{AtiyahBott1968}
Michael~F. Atiyah and Raoul Bott.
\newblock A {L}efschetz fixed point formula for elliptic complexes. {II}.
  {A}pplications.
\newblock {\em Ann. of Math. (2)}, 88:451--491, 1968.

\bibitem[BCR98]{BochnakCosteRoy1998}
Jacek Bochnak, Michel Coste, and Marie-Fran\c{c}oise Roy.
\newblock {\em Real algebraic geometry}, volume~36 of {\em Ergebnisse der
  Mathematik und ihrer Grenzgebiete (3) [Results in Mathematics and Related
  Areas (3)]}.
\newblock Springer-Verlag, Berlin, 1998.
\newblock Translated from the 1987 French original, Revised by the authors.

\bibitem[BdSFNP22]{BelottodaSilvaFantiniNemethiPichon2022}
Andr\'{e} Belotto~da Silva, Lorenzo Fantini, Andr\'{a}s N\'{e}methi, and Anne
  Pichon.
\newblock Polar exploration of complex surface germs.
\newblock {\em Trans. Amer. Math. Soc.}, 375(9):6747--6767, 2022.

\bibitem[BdSFP22a]{BelottodaSilvaFantiniPichon2022a}
Andr\'{e} Belotto~da Silva, Lorenzo Fantini, and Anne Pichon.
\newblock Inner geometry of complex surfaces: a valuative approach.
\newblock {\em Geom. Topol.}, 26(1):163--219, 2022.

\bibitem[BdSFP22b]{BelottodaSilvaFantiniPichon2022}
Andr\'{e} Belotto~da Silva, Lorenzo Fantini, and Anne Pichon.
\newblock On {L}ipschitz normally embedded complex surface germs.
\newblock {\em Compos. Math.}, 158(3):623--653, 2022.

\bibitem[BNP14]{BirbrairNeumannPichon2014}
Lev Birbrair, Walter~D. Neumann, and Anne Pichon.
\newblock The thick-thin decomposition and the bilipschitz classification of
  normal surface singularities.
\newblock {\em Acta Math.}, 212(2):199--256, 2014.

\bibitem[Bri68]{Brieskorn1967/1968}
Egbert Brieskorn.
\newblock Rationale {S}ingularit\"aten komplexer {F}l\"achen.
\newblock {\em Invent. Math.}, 4:336--358, 1967/1968.

\bibitem[Che24]{Cherik2024}
Yenni Cherik.
\newblock Lipschitz geometry of complex surface germs via inner rates of
  primary ideals.
\newblock {\em To appeaer in Ann. Inst. Fourier, arXiv preprint
  arXiv:2407.14265}, 2024.

\bibitem[DT19]{DenkowskiTibar2019}
Maciej Denkowski and Mihai Tib\u{a}r.
\newblock Testing {L}ipschitz non-normally embedded complex spaces.
\newblock {\em Bull. Math. Soc. Sci. Math. Roumanie (N.S.)},
  62(110)(1):93--100, 2019.

\bibitem[Dur83]{Durfee1983a}
Alan~H. Durfee.
\newblock Neighborhoods of algebraic sets.
\newblock {\em Trans. Amer. Math. Soc.}, 276(2):517--530, 1983.

\bibitem[DW15]{DoigWehrli2015}
Margaret Doig and Stephan Wehrli.
\newblock A combinatorial proof of the homology cobordism classification of
  lens spaces.
\newblock {\em arXiv preprint arXiv:1505.06970}, 2015.

\bibitem[FP23]{FantiniPichon2023}
Lorenzo Fantini and Anne Pichon.
\newblock On {L}ipschitz normally embedded singularities.
\newblock In {\em Handbook of geometry and topology of singularities {IV}},
  pages 497--519. Springer, Cham, 2023.

\bibitem[Gre13]{Greene2013}
Joshua~Evan Greene.
\newblock Lattices, graphs, and {C}onway mutation.
\newblock {\em Invent. Math.}, 192(3):717--750, 2013.

\bibitem[Hat02]{Hatcher2002}
Allen Hatcher.
\newblock {\em Algebraic topology}.
\newblock Cambridge University Press, Cambridge, 2002.

\bibitem[Hir53]{Hirzebruch1953}
Friedrich Hirzebruch.
\newblock \"{U}ber vierdimensionale {R}iemannsche {F}l\"{a}chen mehrdeutiger
  analytischer {F}unktionen von zwei komplexen {V}er\"{a}nderlichen.
\newblock {\em Math. Ann.}, 126:1--22, 1953.

\bibitem[Hir73]{Hirzebruch1973}
Friedrich Hirzebruch.
\newblock Hilbert modular surfaces.
\newblock {\em Enseign. Math. (2)}, 19:183--281, 1973.

\bibitem[HNK71]{HirzebruchNeumannKoh1971}
Friedrich Hirzebruch, Walter~D. Neumann, and Sebastian~S. Koh.
\newblock {\em Differentiable manifolds and quadratic forms}.
\newblock Lecture Notes in Pure and Applied Mathematics, Vol. 4. Marcel Dekker,
  Inc., New York, 1971.
\newblock Appendix II by W. Scharlau.

\bibitem[Kin76]{King1976}
Henry~C. King.
\newblock Real analytic germs and their varieties at isolated singularities.
\newblock {\em Invent. Math.}, 37(3):193--199, 1976.

\bibitem[Lau73]{Laufer1973a}
Henry~B. Laufer.
\newblock Taut two-dimensional singularities.
\newblock {\em Math. Ann.}, 205:131--164, 1973.

\bibitem[Lau77]{Laufer1977}
Henry~B. Laufer.
\newblock On minimally elliptic singularities.
\newblock {\em Amer. J. Math.}, 99(6):1257--1295, 1977.

\bibitem[LMW01]{LeMaugendreWeber2001}
D\~{u}ng~Tr\'{a}ng L{\^{e}}, H\'{e}l\`ene Maugendre, and Claude Weber.
\newblock Geometry of critical loci.
\newblock {\em J. London Math. Soc. (2)}, 63(3):533--552, 2001.

\bibitem[LP05]{LuengoPichon2005}
Ignacio Luengo and Anne Pichon.
\newblock L\^{e}'s conjecture for cyclic covers.
\newblock In {\em Singularit\'{e}s {F}ranco-{J}aponaises}, volume~10 of {\em
  S\'{e}min. Congr.}, pages 163--190. Soc. Math. France, Paris, 2005.

\bibitem[Moi52]{Moise1952}
Edwin~E. Moise.
\newblock Affine structures in {$3$}-manifolds. {V}. {T}he triangulation
  theorem and {H}auptvermutung.
\newblock {\em Ann. of Math. (2)}, 56:96--114, 1952.

\bibitem[Neu81]{Neumann1981}
Walter~D. Neumann.
\newblock A calculus for plumbing applied to the topology of complex surface
  singularities and degenerating complex curves.
\newblock {\em Trans. Amer. Math. Soc.}, 268(2):299--344, 1981.

\bibitem[NP07]{NeumannPichon2007}
Walter~D. Neumann and Anne Pichon.
\newblock Complex analytic realization of links.
\newblock In {\em Intelligence of low dimensional topology 2006}, volume~40 of
  {\em Ser. Knots Everything}, pages 231--238. World Sci. Publ., Hackensack,
  NJ, 2007.

\bibitem[NPP20]{NeumannPedersenPichon2020b}
Walter~D. Neumann, Helge~M{\o}ller Pedersen, and Anne Pichon.
\newblock Minimal surface singularities are {L}ipschitz normally embedded.
\newblock {\em J. Lond. Math. Soc. (2)}, 101(2):641--658, 2020.

\bibitem[Orl72]{Orlik1972}
Peter Orlik.
\newblock {\em Seifert manifolds}.
\newblock Lecture Notes in Mathematics, Vol. 291. Springer-Verlag, Berlin-New
  York, 1972.

\bibitem[Pic01]{Pichon2001}
Anne Pichon.
\newblock Fibrations sur le cercle et surfaces complexes.
\newblock {\em Ann. Inst. Fourier (Grenoble)}, 51(2):337--374, 2001.

\bibitem[PP07]{Popescu-Pampu2007}
Patrick Popescu-Pampu.
\newblock The geometry of continued fractions and the topology of surface
  singularities.
\newblock In {\em Singularities in geometry and topology 2004}, volume~46 of
  {\em Adv. Stud. Pure Math.}, pages 119--195. Math. Soc. Japan, Tokyo, 2007.

\bibitem[Sae89]{Saeki1989}
Osamu Saeki.
\newblock Topological types of complex isolated hypersurface singularities.
\newblock {\em Kodai Math. J.}, 12(1):23--29, 1989.

\bibitem[SF18]{SampaioFernandes2018}
J~Edson Sampaio and Alexandre Fernandes.
\newblock {Tangent Cones of Lipschitz Normally Embedded Sets Are Lipschitz
  Normally Embedded. Appendix by Anne Pichon and Walter D. Neumann}.
\newblock {\em Int. Math. Res. Not. IMRN}, 01 2018.

\bibitem[Tro24]{Trotman2024}
David Trotman.
\newblock On semialgebraic homeomorphisms between semialgebraic germs.
\newblock {\em To appear in a volume dedicated to Bernard Teissier, edited by
  the European Mathematical Society Press, estimated publication date 2026.
  Preprint available at https://hal.science/hal-05143858v1}, 2024.

\bibitem[Tur88]{Turaev1988}
Vladimir~G. Turaev.
\newblock Towards the topological classification of geometric {$3$}-manifolds.
\newblock In {\em Topology and geometry---{R}ohlin {S}eminar}, volume 1346 of
  {\em Lecture Notes in Math.}, pages 291--323. Springer, Berlin, 1988.

\bibitem[Wal68]{Waldhausen1968}
Friedhelm Waldhausen.
\newblock On irreducible {$3$}-manifolds which are sufficiently large.
\newblock {\em Ann. of Math. (2)}, 87:56--88, 1968.

\bibitem[Web18]{Weber2018}
Claude Weber.
\newblock Lens spaces among 3-manifolds and quotient surface singularities.
\newblock {\em Rev. R. Acad. Cienc. Exactas F\'{\i}s. Nat. Ser. A Mat. RACSAM},
  112(3):893--914, 2018.

\end{thebibliography}

\vfill

\end{document}